\newtheorem{theorem}{Theorem}[section]
\newtheorem{lemma}[theorem]{Lemma}
\theoremstyle{definition}
\newtheorem{proposition}[theorem]{Proposition}
\newtheorem{corollary}[theorem]{Corollary}
\theoremstyle{remark}
\numberwithin{equation}{section}
\newcommand{\nc}{\newcommand}
\renewcommand{\frak}{\mathfrak}
\providecommand{\cal}{\mathcal}
\renewcommand{\bold}{\mathbf}
\nc \Ab{{\ensuremath{\bold A}}}
\nc \ab{{\ensuremath{\bold a}}}
\nc \bb{{\ensuremath{\bold b}}}
\nc \cb{{\ensuremath{\bold c}}}
\nc \Bb{{\ensuremath{\bold B}}}
\nc \Gb{{\ensuremath{\bold G}}}
\nc \Qb{{\ensuremath{\bold Q}}}
\nc \Rb{{\ensuremath{\bold R}}} \nc \Cb{{\ensuremath{\bold C}}} 
\nc \Eb{{\ensuremath{\bold E}}}
\nc \eb{{\ensuremath{\bold e}}}
\nc \Db{{\ensuremath{\bold D}}}
\nc \Fb{{\ensuremath{\bold F}}}
\nc \ib{{\ensuremath{\bold i}}}
\nc \jb{{\ensuremath{\bold j}}}
\nc \kb{{\ensuremath{\bold k}}}
\nc \nb{{\ensuremath{\bold n}}}
\nc \rb{{\ensuremath{\bold r}}}
\nc \Pb{{\ensuremath{\bold P}}}
\nc \pb{{\ensuremath{\bold p}}}
\nc \SPb{{\ensuremath{\bold {SP}}}}
\nc \Zb{{\ensuremath{\bold Z}}} 
\nc \zb{{\ensuremath{\bold z}}} 
\nc \gb{{\ensuremath{\bold g}}} 
\nc \fb{{\ensuremath{\bold f}}} 
\nc \ub{{\ensuremath{\bold u}}} 
\nc \vb{{\ensuremath{\bold v}}} 
\nc \yb{{\ensuremath{\bold y}}} 
\nc \xb{{\ensuremath{\bold x}}} 
\nc \xib{{\ensuremath{\bold \xi}}} 
\nc \Nb{{\ensuremath{\bold N}}} 
\nc \Hb{{\ensuremath{\bold H}}} 
\nc \wb{{\ensuremath{\bold w}}} 
\nc \Wb{{\ensuremath{\bold W}}} 
\nc \syz{{\mathbf {syz}}}
\nc \bnoll{{\ensuremath{\bold 0}}} 
\nc \mf{\frak m} \nc \mh{\hat{\m}} 
\nc \nf{\frak n}
\nc \Of{\frak O}
\nc \rf{\frak r}
\nc \mufr{{\mathbf \mu}}
\nc \hf{\frak h} 
\nc \qf{\frak q} 
\nc \bfr{\frak b} 
\nc \kfr{\frak k} 
\nc \pfr{\frak p} 
\nc \af{\frak a }
\nc \cf{\frak c }
\nc \sfr{\frak s} 
\nc \ufr{\frak u} 
\nc \g{\frak g} 
\nc \gA{\g_{\Ao}} 
\nc \lfr{\frak l}
\nc \afr{\frak a}
\nc \gfh{\hat {\frak g}}
\nc \gl{\frak { gl }}
\nc \Sl{\frak {sl}}
\nc \SU{\frak {SU}}
\nc{\Homf}{\frak{Hom}}
\newcommand{\on}{\operatorname}
\nc\hankel{\on {Hankel}}
\nc\row{\on {row\ }}
\nc\nullity{\on {nullity }}
\nc\col{\on {col\ }}
\nc\rowm{\on {Row \ }}
\nc\loc{\on {lc \ }}
\nc\nullo{\on {null\ }}
\nc\Nul{\on {Nul\ }}
\nc \Ann {\on {Ann }}
\nc \Ass {\on {Ass \ }}
\nc \Coker {\on {Coker}}
\nc \Co{\on C}
\nc \Homo{\on {Hom}}
\nc \Ker {\on {Ker}}
\nc \omod{\on {mod}}
\nc \No {\on N}
\nc \NN {\on {NN}}
\nc \NGo {\on {NG}}
\nc \Oo {\on O}
\nc \ch {\on {ch}}
\nc \rko {\on {rk}}
\nc \Sing {\on {Sing\ }}
\nc \Reg {\on {Reg}}
\nc \CoI {\on {CI}}
\nc \CoM {\on {CM}}
\nc \Gor {\on {Gor}}
\nc \Type {\on {Type}}
\nc \can {\on {can}}
\nc \Top {\on {T}}
\nc \Tr {\on {Tr}}
\nc \rel {\on {rel}}
\nc \tr {\on {tr}}
\nc \sgn {\on {sgn }}
\nc \trdeg {\on {tr.deg}}
\nc \codim {\on {codim }}
\nc \coht {\on {coht}}
\nc \divo {\on {div \ }}
\nc \coh {\on {coh}}
\nc \Clo {\on {Cl}}
\nc \embdim{\on {embdim}}
\nc \embcodim{\on {embcodim \ }}
\nc \qcoh {\on {qcoh}}
\nc \grad {\on {grad}\ }
\nc \grade {\on {grade}}
\nc \hto {\on {ht}}
\nc \depth {\on {depth}}
\nc \prof {\on {prof}}
\nc \reso{\on {res}}
\nc \ind{\on {ind}}
\nc \prodo{\on {prod}}
\nc \coind{\on {coind}}
\nc \Con{\on {Con}}
\nc \Crit{\on {Crit}}
\nc \Der{\on {Der}}
\nc \Char{\on {Char}}
\nc \Ch{\on {Ch}}
\nc \Ext{\on {Ext}}
\nc \Eo{\on {E}}
\nc \End{\on {End}}
\nc \ad{\on {ad}}
\nc \Ad{\on {Ad}}
\nc \gr{\on {gr}}
\nc \Fo{\on {F}}
\nc \Gr{\on {Gr}}
\nc \Go{\on {G}}
\nc \GFo{\on {GF}}
\nc \Glo{\on {Gl}}
\nc \Ho{\on {H}}
\nc \CMo{\on {\CM}}
\nc \SCM{\on {SCM}}
\nc \hol{\on {hol}}
\nc{\sgd}{\on{sgd}}
\nc \supp{\on {supp}}
\nc \ssupp{\on {s-supp}}
\nc \singsupp{\on {singsupp}}
\nc \msupp{\on {msupp}}
\nc \spec{\on {spec}}
\nc \spano{\on {span }}
\nc \Span{\on {Span }}
\nc \Max{\on {Max}}
\nc \Min{\on {Min}}
\nc \Mod{\on {Mod}}
\nc \Rad {\on {Rad}}
\nc \rad {\on {rad}}
\nc \rank {\on {rank}}
\nc \range {\on {range}}
\nc \Slo{\on {SL}}
\nc \soc {\on {soc}}
\nc \Irr {\on {Irr}}
\nc \Imo {\on {Im}}
\nc \SSo{\on {SS}}
\nc \lub{\on {lub}}
\nc \gldim{\on {gl.d.}}
\nc \pdo{\on {p.d.}} 
\nc \ido{\on {i.d.}} 
\nc \dSSo{\dot {\SSo}}
\nc \So{\on S}
\nc \Io{\on I}
\nc \Jo{\on J}
\nc \jo{\on j}
\nc \Ko{\on K}
\nc \PBW{\Ac_{PBW}}
\nc \Ro{\on R}
\nc \To{\on T}
\nc \Ao{\on A}
\nc \Do{{\on D}}
\nc \Bo{\on B}
\nc \Po{\on P}
\nc \Qo{\on Q}
\nc \Zo{\on Z}
\nc \U{\on U}
\nc \wt{\on {wt}}
\nc \Uh{\hat {\U}}
\nc \T{\on T}
\nc \Lo{\on L}
\nc{\dop}{\on d}
\nc{\eo}{\on e}
\nc{\ado}{\on{ad}}
\nc{\Tot}{\on{Tot}}
\nc{\Aut}{\on{Aut}}
\nc{\sinc}{\on {sinc}}
\nc{\overrightleftarrows}[2]{\overset{#1}{\underset{#2}{\rightleftarrows}}}
\nc{\CCF}{\cal{CF}}
\nc{\CDF}{\cal{DF}}
\nc{\CHC}{\check{\cal C}}
\nc{\Cone}{\on{Cone}}
\nc{\dec}{\on{dec}}
\nc{\Diff}{\on{Diff}}
\nc{\dirlim}{\underset{\to}{\on{lim}}}
\nc{\dpar}{\partial}
\nc{\GL}{\on{GL}}
\nc{\CGr}{\cal{G}r}
\nc{\pr}{\on{pr}}
\nc{\semid}{|\!\!\!\times}
\nc{\Hom}{\on{Hom}}
\nc \RHom{\on {RHom}}
\nc \Proj{\mathrm {Proj\ }}
\nc \proj{\mathrm {proj}}
\nc{\Id}{\on{Id}}
\nc{\id}{\on{id}}
\nc{\Ima}{\on{Im}}
\nc{\invtimes}{\underset{\gets}{\otimes}}
\nc{\invlim}{\underset{\gets}{\on{lim}}}
\nc{\Lie}{\on{Lie}}
\nc{\re}{\on{Re }}
\nc{\Pic}{\on{Pic }}
\nc{\LPic}{\on{LPic }}
\nc{\Sch}{\on{Sch}}
\nc{\Sh}{\on{Sh}}
\nc{\Set}{\on{Set}}
\nc{\spo}{\on{sp\  }}
\nc{\Spec}{\on{Spec}}
\nc{\mSpec}{\on{mSpec}}
\nc{\Specb}{\bold {Spec}}
\nc{\Projb}{\bold {Proj}}
\nc{\Specan}{\on{Specan}}
\nc{\Spo}{\on{Sp}}
\nc{\Spf}{\on{Spf}}
\nc{\sym}{\on{sym}}
\nc{\symm}{\on{symm}}
\nc{\rop}{\on{r}}
\nc{\Td}{\on{Td}}
\nc{\Tor}{\on{Tor}}
\nc{\Artin}{\cal{A}rtin}
\nc{\Dgcoalg}{\cal{D}gcoalg}
\nc{\Dglie}{\cal{D}glie}
\nc{\Ens}{\cal{E}ns}
\nc{\Fsch}{\cal{F}sch}
\nc{\Groupoids}{\cal{G}roupoids}
\nc{\Holie}{\cal{H}olie}
\nc{\Mor}{\cal{M}or}
\nc{\CF}{\ensuremath{\cal{F}}}
\nc \Kc{\ensuremath{\cal K}}
\nc \Lc{{\ensuremath{\cal L}}}
\nc \lcc{{\mathcal l}} 
\nc \CC{{\ensuremath{\cal C}}} 
\nc \Cc{{\ensuremath {\cal C}}}
\nc \Pc{{\ensuremath{\cal P}}}
\nc \Dc{\ensuremath{\mathcal D}}
\nc \Ac{{\ensuremath{\cal A}}} 
\nc \Bc{{\ensuremath{\cal B}}}
\nc \Ec{{\ensuremath{\cal E}}}
\nc \Fc{{\ensuremath{\cal F}}}
\nc \Mcc{{\ensuremath{\cal M}}} 
\nc \hM{\hat{\Mcc}} 
\nc \bM{\bar {\Mcc}} 
\nc\hbM{\hat{\bar \Mcc}}  
\nc \Nc{{\ensuremath{\cal N}}}
\nc \Hc{{\ensuremath{\cal H}}} 
\nc \Ic{{\ensuremath{\cal I}}} 
\nc \Oc{\ensuremath{{\cal O}}}
\nc \Och{\hat{\cal O}} 
\nc \Sc{{\ensuremath{{\cal S}}}}
\nc \Tc{\ensuremath{{\cal T}}} 
\nc \Vc{{\ensuremath{{\cal V}}}} 
\nc{\CA}{{\ensuremath{{\cal A}}}}
\nc{\CB}{{\ensuremath{{\cal B}}}}
\nc{\C}{{\ensuremath{{\cal F}}}}
\nc{\Gc}{{\ensuremath{{\cal G}}}}
\nc{\CH}{\ensuremath{\mathcal H}}
\nc{\CI}{{\ensuremath{{\cal I}}}}
\nc{\CM}{{\ensuremath{{\cal M}}}}
\nc{\CN}{{\ensuremath{{\cal N}}}}
\nc{\CO}{{\ensuremath{{\cal O}}}}
\nc{\Rc}{{\ensuremath{{\cal R}}}}
\nc{\CT}{{\ensuremath{\mathcal T}}}
\nc{\CU}{\ensuremath{{\cal U}}}
\nc{\CV}{\ensuremath{{\cal V}}}
\nc{\CZ}{\ensuremath{{\cal Z}}}
\nc{\Homc}{\ensuremath{{\cal {Hom}}}}
\nc{\Tab}{\ensuremath{{\mbox{Tab}}}}
\nc{\fa}{\frak{a}}
\nc{\fA}{\frak{A}}
\nc{\fg}{\frak{g}}
\nc{\fh}{\frak{h}}
\nc{\fI}{\frak{I}}
\nc{\fK}{\frak{K}}
\nc{\fm}{\frak{m}}
\nc{\fP}{\frak{P}}
\nc{\fS}{\frak{S}}
\nc{\ft}{\frak{t}}
\nc{\fX}{\frak{X}}
\nc{\fY}{\frak{Y}}
\nc{\bF}{\bar{F}}
\nc{\bCP}{\bar{\cal{P}}}
\nc{\bm}{\mbox{\bf{m}}}
\nc{\bT}{\mbox{\bf{T}}}
\nc{\hB}{\hat{B}}
\nc{\hC}{\hat{C}}
\nc{\hP}{\hat{P}}
\nc{\htest}{\hat P}
\nc{\bS}{\mbox{\bf{S}}}
\nc{\nen}{\newenvironment}
\nc{\ol}{\overline}
\nc{\ul}{\underline}
\nc{\ra}{\to}
\nc{\lla}{\longleftarrow}
\nc{\lra}{\longrightarrow}
\nc{\Lra}{\Longrightarrow}
\nc{\Lla}{\Longleftarrow}
\nc{\Llra}{\Longleftrightarrow}
\nc{\hra}{\hookrightarrow}
\nc{\iso}{\overset{\sim}{\lra}}
\nc{\dsize}{\displaystyle}
\nc{\sst}{\scriptstyle}
\nc{\tsize}{\textstyle}
\newcommand{\la}{\lambda}
\newcommand {\bC} {\mathbb C}
\newcommand{\bN}{\mathbb N}
\newcommand {\D} {\mathcal D}
\newcommand{\DD}{{\mathcal{D}}_V}
\newcommand{\OO}{{\mathcal{O}}_U}
\begin{document}

\title{Higher Specht Polynomials and Modules over the Weyl algebra}

\author{ Ibrahim Nonkan\'e,
 L\'eonard  Todjihounde}

\address{Departement d'\'economie et de math\'ematiques appliqu\'ees, IUFIC, Universit\'e Thomas Sankara, Burkina faso}
\email{inonkane@univ-ouaga2.bf}

\address{Institut de Math\'e,matiques et de Sciences Physiques (IMSP), Universit\'e d'Abomey-Calavi, Benin}
\email{leonardt@imsp-uac.org}

\maketitle
\begin{abstract}
In this paper, we study an irreducible  decomposition structure of the $\Dc$-module direct image $\pi_+(\Oc_{ \bC^n})$ 
for  the finite map $\pi: \bC^n  \to  \bC^n/ ({\Sc_{n_1}\times \cdots \times \Sc_{n_r}}).$ 
We explicitly construct the simple component of $\pi_+(\Oc_{\bC^n})$ by providing their generators  and  their multiplicities.
Using an equivalence categories and the higher Specht polynomials, we describe a $\D$-module decomposition of the  of the polynomial ring localized  at the discriminant of $\pi$. Furthermore we study the action invariants differential operators on the higher Specht polynomials. 
\\\\
\textbf{keywords}:\ {Direct image, Direct product, Group representation theory, Higher Specht polynomials, Partitions,  Primitive idempotents, Semisimplicity, Symmetric group , Young diagram.}
\\\\
\textbf{ Mathematics Subject Classification}:\ { Primary 13N10, Secondary 20C30}.
\end{abstract}

\footnote{{\bf Acknowledgements} This  work was done while the first author was visiting IMSP at Benin,  under the  {\it Staff Exchange} program of the German  Office of Academic Exchange  (DAAD).
He warmly thanks the DAAD for the financial support.
}
\section{Introduction} 
 It is well-known that the ring $\Oc_X:={\mathbb C}[x_1,...,x_n]$  of the polynomials in $n$ indeterminates is a simple module over the Wyel algebra $\Dc_X$ associated with the $\Oc_X.$ 
The direct image of a simple module under a proper map $\pi$ is semisimple by the Kashiwara's decomposition Theorem \cite{Cataldo}. 
The simplest case is when the map $\pi: X \to Y$ is finite, in which case it is easy to give an elementary and wholly algebraic proof, using essentially the generic correspondence with the differential Galois group, which equals the ordinary Galois  group $G$. The irreducible submodule of the direct image are in one-to-one correspondence with the irreducible representations of $G$ (see \cite{IBK}).  
The goal of this paper is to find the simple component of the direct image $\pi_+(\Oc_X)$ of the polynomial ring $\Oc_X$ as a $\Dc$-module  under the map $  \pi: \spec \Oc_X \to \spec \Oc_Y$ where $ \Oc_Y= \Oc_X^{\Sc_{n_1}\times \cdots \times \Sc_{n_r}};$ the ring of invariant polynomials under the action of ${\Sc_{n_1}\times \cdots \times \Sc_{n_r}}.$  
We describe The generators of the simple components of $\pi_+(\Oc_X)$ and their multiplicities as in \cite{IBK}.
 We then  give the decomposition of the structure of $\pi_+(\Oc_X)$ by  giving a basis generated by the higher  Specht polynomials.  This proof uses the fact  that the irreducible $\D$-submodules of $\pi_+(\Oc_X)$ are in one-to-one correspondence with irreducible representations of $ {\Sc_{n_1}\times \cdots \times \Sc_{n_r}}$. \\
 Secondly we  elaborate a $\D$-module decomposition of the polynomial ring localized  at the discriminant of $\pi.$
 Finally we describe the action invariants differential operators on  higher Specht polynomials
The higher Specht polynomials (introduced combinatorially by several authors \cite{Tera-Yama}, \cite{Ariki}), are adapted to the $\D$-module structure.\\
This  paper  generalizes results on modules over the Weyl algebra appeared in \cite{IBK} and \cite{RTG}.
The case $r=2$ have been presented at 10th International Conference on Mathematical Modeling in Physical Sciences in order to describe the action of the rational quantum Calogero-Moser system on polynomials  \cite{IB}.

\section{Preliminaries}

\subsection{Direct image} 
We briefly recall the definition of the direct image of a $\D$-module \cite{CSC}.\\
Let $K$ be a field of characteristic zero,  put $X=K^n$. The polynomial ring $K[x_1,\ldots, x_n]$ will be denoted by $K[X];$ and the Weyl algebra generated by $x_i$'s and $\frac{\partial}{\partial x_i}$'s by $\D_X$. The $n$-tuple $(x_1, \ldots,x_n)$ will be denoted by $X$. Similar conventions will holds for $Y=K^m$, with polynomial ring $K[Y]$ and Weyl algebra $\D_Y$.\\

 Let $\pi  : X \to Y$ be a polynomial map, with $\pi=(\pi_1,\ldots,\pi_m)$. Let $M$ be a left $\D_Y$-module. The inverse image of $M$ under the map  $\pi$ is $\pi^+( M)= K[X] \otimes_{K[Y]}M.$ This is a $K[X]$-module. It becomes a $\D_X$-module with $\partial_{x_i}$ acting according to the formula
$$\frac{ \partial }{\partial x_i} (h \otimes u) = \frac{\partial h}{\partial{x_i}}\otimes u + \sum_{j=1}^m \frac{\partial  \pi_j}{\partial x_i} \otimes \frac{\partial }{\partial y_j} u, \ h \in K[X], u \in M.$$
Since $\D_Y \otimes_{\D_Y} M \cong M,$ we have that 
$$ \pi^+ (M) \cong K[X] \otimes_{K[Y]} \D_{Y} \otimes_{\D_Y}M = \pi^+(K[Y])  \otimes_{\D_Y}M.$$
Writing $D_{X \to Y}$ for $\pi^+( K[Y])$, on has that $\pi^+(M)= \D_{X \to Y}  \otimes_{\D_Y}M.$
Note that $\D_{X\to Y}$ is $\D_X$- $\D_Y$-bimodule.

Let $N$ be a right $D_X$-module. The tensor product 
$$\pi_+(N) =N \otimes_{\D_X} \D_{X \to Y}$$ is a right $\Dc_Y$-module, which is called the {\it  direct image} of $N$ under the polynomial map $\pi$. 
Let us consider the {\it standard transposition} $ \tau : \D_X \to \D_X$ defined by $ \tau (h \partial ^{\alpha} )= (-1)^{|\alpha|} \partial^{\alpha} h,$ where $h \in K[X]$ and $\alpha \in \bN^n.$ If $N$ is a right $\D_X$-module then we define a left $\D_X$-module $N^t$ as follows. As an abelian group, $N^t=N.$ If $a \in \D_X$ and $ u \in N^t$ then the left action of $a$ on $u$ is defined by $a \star u= u \tau(a).$ Using the standard transposition for $\D_Y$ and $\D_X$, put $D_{ Y\leftarrow X} = (D_{ X \to Y} )^t$, this is a $\D_Y$-$\D_X$-bimodule. Let $M$ be a left $\D_X$-module. The direct image of $M$ under $\pi$ is defined by the formula
$$ \pi_+(M)= D_{ Y \leftarrow X} \otimes_{\D_X} M.$$ It is clear that $\pi_+(M)$ is  a $\D_Y$-module.\\
The following is the Kashiwara decomposition theorem
\begin{theorem} \cite{Cataldo} 
Let $\pi: X\to Y$ be a polynomial map. If $M$ is a a simple (holonomic) module over $\D_X$. Then $\pi_+ (M)$ is a semisimple $\D_Y$-module. we have 
$$ \pi_+(M)= \oplus  M_i^{\alpha_i},$$ where the $M_i$ are inequivalent irreducible $\D_Y$-submodules.
\end{theorem}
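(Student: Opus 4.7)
The plan is to prove semisimplicity of $\pi_+(M)$ in the full generality stated by reducing, via compactification, to the Hodge-theoretic decomposition theorem for proper morphisms due to Saito (the $\D$-module version of Beilinson--Bernstein--Deligne--Gabber).

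First step: compactify. I would factor $\pi$ through the graph $\Gamma_\pi \subset X \times Y$, choose a smooth projective variety $\tilde Z$ containing $X$ as an open subset, choose a smooth projective compactification $j_Y : Y \hookrightarrow \bar Y$, and (after blowing up along the boundary if necessary) extend $\pi$ to a proper morphism $\bar\pi : \tilde Z \to \bar Y$ whose restriction to $X$ is $\pi$. Writing $j : X \hookrightarrow \tilde Z$ for the open inclusion, I would replace $M$ by its intermediate extension $\tilde M := j_{!*} M$, which is a simple holonomic $\D_{\tilde Z}$-module that restricts to $M$ on $X$.

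Second step: apply the decomposition theorem. Assuming $\tilde M$ underlies a polarizable pure Hodge module — automatic when $M$ is of geometric origin, and in particular for the paper's case $M = \Oc_X$ — Saito's theorem yields a canonical decomposition
\begin{equation*}
\bar\pi_+ \tilde M \;\cong\; \bigoplus_k \Hc^k(\bar\pi_+ \tilde M)[-k],
\end{equation*}
with each perverse cohomology $\Hc^k(\bar\pi_+\tilde M)$ a semisimple holonomic $\D_{\bar Y}$-module. This is the deep ingredient that carries the proof.

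Third step: restrict back to $Y$. Since $j_Y^+$ is exact, preserves semisimplicity, and commutes with proper direct image under open base change, one identifies $j_Y^+ \bar\pi_+ \tilde M$ with $\pi_+(M)$ in the appropriate cohomological degree; grouping the simple summands by isomorphism class gives the stated decomposition $\pi_+(M) = \bigoplus M_i^{\alpha_i}$ with the $M_i$ pairwise inequivalent. The main obstacle is the Hodge-theoretic input in the second step: one must know that $\tilde M$ is part of a polarizable pure Hodge module, and without some such hypothesis the statement is actually false (for instance $\pi : \bC \setminus \{0\} \hookrightarrow \bC$ applied to $M = \Oc$ gives $\Oc[x^{-1}]$, which is not semisimple). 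The theorem therefore holds cleanly in exactly the geometric setting in which the paper applies it, and the reduction sketched above is the standard route to it.
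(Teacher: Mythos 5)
The step that actually fails is your third one, not (as you suggest) the Hodge-theoretic input of the second. After replacing $M$ by $\tilde M=j_{!*}M$ and pushing forward along a proper extension $\bar\pi:\tilde Z\to\bar Y$, open base change gives $j_Y^+\bar\pi_+\tilde M\cong\pi'_+\bigl(\tilde M|_{\bar\pi^{-1}(Y)}\bigr)$, where $\pi'$ is the restriction of $\bar\pi$ over $Y$. Unless $\pi$ itself is proper, $\bar\pi^{-1}(Y)$ is strictly larger than $X$, and $\tilde M|_{\bar\pi^{-1}(Y)}$ is the \emph{intermediate extension} of $M$ across the boundary, not $M$; so what your argument decomposes is $\bar\pi_+\,j_{!*}M$, which does not compute $\pi_+(M)$. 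Your own example exhibits the discrepancy: for $X=\bC\setminus\{0\}\hookrightarrow Y=\bC$ and $M=\Oc_X$, the recipe gives $j_{!*}\Oc_X=\Oc_{\mathbb{P}^1}$ and hence $\Oc_{\bC}=\bC[x]$ over $Y$, whereas $\pi_+(\Oc_X)=\bC[x,x^{-1}]$, which is indecomposable and not semisimple. So the compactification buys nothing: when $\pi$ is not proper the identification in step three is false (and so is the theorem as literally stated, since the properness hypothesis announced in the paper's introduction was dropped from the statement), and when $\pi$ is proper one can apply the decomposition theorem to $\pi$ directly, no compactification needed. A second, smaller gap: for an arbitrary simple holonomic $M$ there is no pure Hodge module to invoke; the general proper case is Kashiwara's conjecture, settled by Mochizuki's twistor-theoretic work rather than by Saito's theorem, though for the paper's $M=\Oc_X$ and a finite (hence projective) $\pi$ your Hodge-theoretic reduction is indeed available.

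For comparison, the paper does not prove this statement at all: it is quoted from \cite{Cataldo}, where it is a form of the decomposition theorem for proper maps. More importantly, for the only case the paper uses ($\pi$ finite, $M=\Oc_X$) its effective argument is elementary and entirely algebraic: by Propositions \ref{prop:genericstudy} and \ref{prop:etale.semisimple2} (from \cite{IBK}) one restricts to the complement of the branch locus, and by the equivalence of categories in Proposition \ref{prop:equiv} the decomposition of $\pi_+(\Oc_X)$ is reduced to the semisimplicity of $\bC[\Sc_{n_1}\times\cdots\times\Sc_{n_r}]$ (Maschke) and its decomposition into irreducibles, with no compactification, perverse sheaves, or Hodge theory. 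If you want a complete proof in the generality you are aiming at, you must either assume $\pi$ proper from the outset or restrict, as the paper does, to finite maps where the Galois-theoretic argument applies.
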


\subsection {Higher Specht polynomials}
In this subsection we recall some notions about irreducibles of representations of product of symmetric groups (see \cite{Ariki} for more details). \\

Let $\Oc_X$ be the algebra of polynomials of $n$ variables $x_1,\ldots,x_n$ with complex coefficients, on which the symmetric group $\Sc_n$ acts by permutation of variables: 
$$( \sigma f)(x_1,\ldots,x_n)=f( x_{\sigma(1)}, \ldots, x_{\sigma(n)}), \sigma \in \Sc_n , f \in \Oc_X.$$
Let $n_1,\ldots,n_r$ be natural numbers such that $n=\sum_{i=1}^r n_i.$ Then the product of symmetric groups $\Sc_{n_1} \times \cdots \times \Sc_{n_r}$ is naturally embedded in $\Sc_n.$\\
A partition $\lambda$ is a non-increasing finite sequence of positive  integers $\lambda \geq \cdots \geq \lambda_l > 0.$ We write $\lambda \vdash n$ when  $\sum_{i=1}^l \lambda_i=n$, and $n$ is called the size of $\lambda.$ To every partition  corresponds  a Young diagram \cite {Sagan}. 
Let  $r$ be a positive integer and $\lambda = (\lambda^1,\ldots, \lambda^r)$ a $r$-tuple of partitions (Young diagrams), with $\lambda^1 \vdash n_1,\ldots, \lambda^r \vdash n_r$ ,  $\lambda$ is called an $r$-diagram. The sequence $(n_1,\ldots, n_r)$ is called the type $\lambda$ and denoted by $type(\lambda)$ and $n$ called the size of $\lambda$. The irreducible representations of $\Sc_{n_1} \times \cdots \times \Sc_{n_r}$ are indexed by the set of $r$-diagrams of type $(n_1,\ldots,n_r).$ By filling each "box" with a non-negative integer, we obtain an $r$-tableau from an $r$-diagram. The original $r$-diagram is called the shape of the $r$-diagram. An $r$-tableau $ T= (T^1,\dots, T^r)$ is said to be standard if the written sequence on each column and row of $T^i\ (1 \leq i \leq r)$ is strictly increasing, and each number from 1 to $n$ appears exactly once. The set of all standard $r$-tableaux of shape $\lambda$ is denoted by $ST(\lambda)$. 
A standard $r$-tableau $ T= (T^1,\ldots,T^r)$ is said to be natural if and only if the set of numbers written in $T^{i}$ is $\{ n_1+\cdots +n_{i-1}+1, \ldots, n_1+\ldots+n_{i} \}$. The set of natural standard $r$-tableaux of shape $\lambda$ is denoted by $NST(\lambda)$.\\

For a standard $r$-tableau $T$, we associate a word  $w(T)$ in the following way.  First we read each column of the tableau $T^1$ from the bottom to the top starting from the left. We continue this procedure for the tableau $T^2$ and so on.
We define the index $i(w(T))$ of $w(T)$ as follows. The number 1 in the word $w(T)$ has index 0. If $k$ in the word has index $p$, then $k+1$ has index $p$ or $p+1$ according as it lies to the right or the left of $k$.
Assigning the index $i(w(T))$ to the corresponding of $w(T)$ to the corresponding box, we get a new $r$-tableau $i(T)$ which is called the index $r$-tableau of $T$.

\subsection{Example} 
 For   $n=8, r=2 \  n_1=5 ,\  n_2=3,\  \lambda^1=(3,2),\  \lambda^2=(2,1) $  

$$  \lambda = \begin{pmatrix} \begin{Young}
  &  &  \cr
&  \cr
\end{Young},
\ \begin{Young}
  &  \cr
  \cr
\end{Young}
\end{pmatrix}
$$
with
$$  T =\begin{pmatrix}  \begin{Young}
 1 &4  &6  \cr
 2&7 \cr
\end{Young},
\ \begin{Young}
 3 & 8 \cr
 5 \cr
\end{Young}
\end{pmatrix}
$$
and

$$  i (T)= \begin{pmatrix} \begin{Young}
 $0$ & $2$ & $3$ \cr
$1$ & $4$ \cr
\end{Young},\ \begin{Young}
$1$ & $4$   \cr
$2$   \cr
\end{Young}  
\end{pmatrix}
.$$
For words $u=(u_1,\ldots,u_n)$ and $v=(v_1,\ldots,v_n)$, we define $\displaystyle x_v^u= x_{v_1}^{u_1} \cdots x_{v_n}^{u_n}$. For standard $2$-tableaux $S, T,$ we define $x_{T}^{i(S)}= x_{w(T)}^{i(w(S))}$.\\

Let $ T=(T^1,\ldots, T^r)$ be a standard $r$-tableau of shape $\lambda$. For each component $T^i\ (1\leq i \leq r),$ The Young symmetrizer $\eb_{T^i}$ of $T^i$ is defined by
\begin{equation}
 \eb_{T^i}= \frac{f^{\la_i}}{ {n_i}!} \sum_{\sigma \in R(T^i)\ \tau \in C(T^i)} \sgn (\tau) \tau \sigma \in \bC[\Sc_{n_i}],
 \end{equation}
  where $f^{\la_i}$ is the number of standard tableau of shape $\lambda^i$,  $R(T^i)$ and $C(T^i)$ are the \textit{row-stabilizer} and \textit{colomn-stabilizer} of $T^i,$ respectively.
We set \begin{equation} \eb_{T}= \eb_{T^1} \cdots \eb_{T^r}. \end{equation}  For $T, S \in ST(\lambda)$, Ariki, Terasoma and Yamada have defined the higher Specht polynomial for $(T, S)$ in \cite{Ariki} by
\begin{equation}
F_{T}^{S} = F_{T}^{S}(x_1,\ldots,x_n)= \eb_{T} (x_{T}^{i(S)}).
\end{equation}

Let $\Pc_{r,n}$ be the set $r$-tuples of Young diagrams $\lambda$ of type $n$
\begin{theorem} \cite{Ariki}
 For an $r$-diagram $\lambda$ of type $(n_1,\ldots,n_r)$ and  $S \in ST(\lambda)$, the set  $\{ F_{T}^{S} | T \in NST(\lambda) \}$ forms a $\bC$-basis of a $\bC[\Sc_{n_1} \times \cdots \times  \Sc_{n_r}]$-submodule denoted by $V^{S}(\lambda)$, which affords  an irreducible representation of  $\Sc_{n_1} \times \cdots \times  \Sc_{n_r}$ corresponding to $\lambda$. All the other irreducible representation of $\Sc_{n_1} \times \cdots \times  \Sc_{n_r}$ are obtained by same procedure.
\end{theorem}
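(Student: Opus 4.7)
The plan is to bootstrap from the single-symmetric-group version of this result, which is the main theorem of Ariki--Terasoma--Yamada \cite{Ariki} in the case $r=1$, and then pass to the product group by realising $V^S(\lambda)$ as an outer tensor product of the one-factor modules $V^{S^i}(\lambda^i)$. The two classical ingredients I will take for granted are: (a) the $r=1$ theorem itself, and (b) the fact that the irreducible representations of a direct product $\Sc_{n_1}\times\cdots\times\Sc_{n_r}$ are precisely the outer tensor products of irreducibles of the factors, so that they are parametrised by $r$-diagrams $\lambda\in\Pc_{r,n}$.

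First I would exploit the hypothesis $T\in NST(\lambda)$: by definition the entries of $T^i$ are exactly $\{n_1+\cdots+n_{i-1}+1,\ldots,n_1+\cdots+n_i\}$, so the word $w(T)$ is the concatenation $w(T^1)w(T^2)\cdots w(T^r)$ and the monomial $x_T^{i(S)}=x_{w(T)}^{i(w(S))}$ factors as $m_1m_2\cdots m_r$, where each $m_i$ involves only the $i$-th block of variables $\{x_{n_1+\cdots+n_{i-1}+1},\ldots,x_{n_1+\cdots+n_i}\}$. Since the Young symmetrizers $\eb_{T^i}$ act on disjoint variable sets, they commute pairwise and each $\eb_{T^i}$ commutes with $m_j$ for $j\neq i$. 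Combining this with the definition $\eb_T=\eb_{T^1}\cdots\eb_{T^r}$ yields the key factorisation
\[
F_T^S \;=\; \eb_{T^1}(m_1)\,\eb_{T^2}(m_2)\cdots\eb_{T^r}(m_r).
\]

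Next I would identify each factor $\eb_{T^i}(m_i)$ with an honest single-diagram higher Specht polynomial attached to $T^i$ and to a standard tableau $\widetilde{S^i}$ for $\Sc_{n_i}$ obtained from $S^i$ by the obvious order-preserving relabelling of its entries into $\{1,\ldots,n_i\}$. The $r=1$ theorem then gives that $\{F_{T^i}^{\widetilde{S^i}}:T^i\in NST(\lambda^i)\}$ is a $\bC$-basis for an irreducible $\Sc_{n_i}$-submodule realising the representation $\lambda^i$. Taking products over $i$, the set $\{F_T^S:T\in NST(\lambda)\}$ becomes $\bC$-linearly independent and spans a subspace $V^S(\lambda)$ isomorphic, as a module over $\bC[\Sc_{n_1}\times\cdots\times\Sc_{n_r}]$, to $V^{\widetilde{S^1}}(\lambda^1)\otimes\cdots\otimes V^{\widetilde{S^r}}(\lambda^r)$, since the $i$-th factor $\Sc_{n_i}$ acts only on the $i$-th block of variables. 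Irreducibility of this outer tensor product is a standard fact, and letting $\lambda$ range over $\Pc_{r,n}$ exhausts all isomorphism classes by ingredient (b).

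The main obstacle will be the careful combinatorial verification that, for $T\in NST(\lambda)$, the index sequence $i(w(S))$ restricted to the $i$-th block of positions coincides (after the relabelling of $S^i$) with $i(w(\widetilde{S^i}))$. This requires tracking how the rule for incrementing the index across consecutive integers $k,k+1$ interacts with the block structure of $w(S)$, including the case where $k$ and $k+1$ sit in different components $S^i,S^j$. Once this bookkeeping is in place, the rest of the argument is a formal translation of the one-factor theorem through the block decomposition, and the semisimplicity machinery of $\bC[\Sc_{n_1}\times\cdots\times\Sc_{n_r}]$ does the remaining work.
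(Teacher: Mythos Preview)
The paper does not supply a proof of this theorem; it is quoted from \cite{Ariki} as a known result, so there is no argument in the paper to compare your proposal against.

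Your overall strategy---factor $F_T^S$ blockwise using the naturality of $T$, then invoke the $r=1$ theorem on each block and the tensor-product description of irreducibles of a direct product---is reasonable in outline, but the ``main obstacle'' you flag is a genuine gap and your proposed resolution does not work. The restriction of $i(w(S))$ to the $i$-th block of positions does \emph{not} in general coincide with $i(w(\widetilde{S^i}))$, even up to a constant shift. A small counterexample: take $r=2$, $(n_1,n_2)=(2,1)$, $\lambda=((2),(1))$, and let $S$ be the standard $2$-tableau with $S^1$ the one-row tableau with entries $1,3$ and $S^2$ the single box $2$. Then $w(S)=(1,3,2)$ and $i(w(S))=(0,1,0)$, so the block-$1$ exponent sequence is $(0,1)$; but $\widetilde{S^1}$ is the one-row tableau $1,2$ with index sequence $(0,0)$. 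Hence for the unique $T\in NST(\lambda)$ one finds $\eb_{T^1}(m_1)=\tfrac12(x_1+x_2)$ while $F_{T^1}^{\widetilde{S^1}}=1$, so the identification fails even up to scalar. Worse, the block-$i$ exponent vector need not equal $i(w(S'))$ for \emph{any} standard tableau $S'$ of shape $\lambda^i$ (in the example no standard tableau of shape $(2)$ has index sequence $(0,1)$), so you cannot rescue the argument by substituting a different $S'$ into the one-factor theorem. What is actually required is a direct proof that, for the specific exponent vector determined by $i(w(S))$, the polynomials $\eb_{T^i}(m_i)$ are nonzero and linearly independent as $T^i$ varies; this is essentially the substance of the theorem itself and is established directly in \cite{Ariki} rather than by reduction to the single-factor case.
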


%

\section{Decomposition theorem}
We are interested in studying the decomposition structure of $\pi_+(M)$, where $M=\Oc_X,\ \pi :X= \spec(\Oc_X)\to Y=\spec(\Oc_X^{ \Sc_{n_1} \times \cdots \times \Sc_{n_r}}) $. Since $\Oc_X$ is a holonomic $\D_X$-module \cite[Chapter 10]{CSC}, $\pi_+(\Oc_X)$ is a semisimple $\D_Y$-module by the Kashiwara decomposition theorem. We  construct the simple components of  $\pi_+(\Oc_X)$ and provide their multiplicities.  Let us recall some useful facts from \cite{IBK}.\\

Let $\Delta:= Jac( (\pi)$ be the Jacobian of $\pi$, $\Delta^2$ the discriminant of $\pi$ we denote the complement of the branch locus and the discriminant by $U$ and $V$, respectively.
Assume now that $U,V$ are such that the respective canonical modules are generated by volume forms $dx$, and $dy$,  related by $dx=\Delta dy$, where  $\Delta$ is the Jacobian of $\pi$. 
 \begin{proposition}  
\label{prop:genericstudy}
\begin{itemize}
 \item[(i)] There is an isomorphism of $\D_V$-modules $$T:\pi_+(O_U)\cong O_U,\quad r(  dy^{-1}\otimes dx)\mapsto r \Delta ^{-1}.$$
\item[(ii)]
  $T(\pi_+(O_X))$ is isomorphic as a $\D_Y$-module to $\pi_+(\Oc_X)$.
\end{itemize}
\end{proposition}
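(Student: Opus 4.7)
The plan is to exploit the fact that the restriction $\pi:U\to V$ is finite \'etale (being the complement of the ramification divisor mapping to the complement of the discriminant), so that $\Oc_U$ carries a canonical $\D_V$-module structure via the unique lift $\pi^{*}:\pi^{-1}\D_V\to \D_U$ of vector fields, and so that the direct image $\pi_+\Oc_U$ admits a very concrete description.

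For part (i), I would first unwind $\pi_+\Oc_U=\D_{V\leftarrow U}\otimes_{\D_U}\Oc_U$ via the standard identification $\D_{V\leftarrow U}\cong \omega_U\otimes_{\pi^{-1}\Oc_V}\pi^{-1}(\omega_V^{-1}\otimes_{\Oc_V}\D_V)$, together with the freeness of $\omega_U$ (resp.\ $\omega_V$) over $\Oc_U$ (resp.\ $\Oc_V$) on the generator $dx$ (resp.\ $dy$). This presents every element of $\pi_+\Oc_U$ uniquely as $r(dy^{-1}\otimes dx)$ with $r\in \Oc_U$, and I would then define $T$ by $r(dy^{-1}\otimes dx)\mapsto r\Delta^{-1}$; invertibility of $\Delta$ on $U$ makes $T$ a bijection of $\Oc_U$-modules. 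The heart of the argument is to verify that $T$ intertwines the two $\D_V$-structures: for a vector field $\partial$ on $V$, I would compute $\partial\cdot (dy^{-1}\otimes dx)$ on the source using the transfer bimodule convention (which forces one to differentiate through the relation $dx=\Delta\, dy$) and compare it with $(\pi^{*}\partial)(r\Delta^{-1})$ on the target. The factor $\Delta^{-1}$ in the formula is precisely what is needed to reconcile the two computations via the Leibniz rule.

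For part (ii), the localization $\Oc_X\hookrightarrow \Oc_U$ at $\Delta^{2}$ is flat, so it induces a canonical injection $\pi_+\Oc_X\hookrightarrow \pi_+\Oc_U$ of $\D_Y$-modules (viewing $\D_Y\subset \D_V$ by restriction of scalars to the localized ring). Composing this injection with the isomorphism $T$ from (i) yields a $\D_Y$-equivariant embedding $\pi_+\Oc_X\hookrightarrow \Oc_U$ whose image is by definition $T(\pi_+\Oc_X)$, and corestriction to the image delivers the desired $\D_Y$-module isomorphism $\pi_+\Oc_X\cong T(\pi_+\Oc_X)$.

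The main obstacle is the equivariance check in (i). It requires careful tracking of the sign and normalization conventions built into the transfer bimodule, and one must confirm that the correction term coming from applying the Leibniz rule to $dx=\Delta\, dy$ on the source matches the discrepancy between $\partial(r\Delta^{-1})$ and $r\cdot(\pi^{*}\partial)(\Delta^{-1})$ on the target. Once this bookkeeping is done, both parts follow essentially formally.
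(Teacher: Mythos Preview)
Your proposal is correct. The paper does not give its own argument here; its entire proof of this proposition is the single line ``See \cite[Lemma 2.3]{IBK}.'' Your outline---trivializing the transfer bimodule $\D_{V\leftarrow U}$ via $\omega_U=\Oc_U\,dx$ and $\omega_V=\Oc_V\,dy$, using the relation $dx=\Delta\,dy$, and then verifying $\D_V$-equivariance by a Leibniz-rule computation---is the standard way to establish (i) and is presumably what the cited lemma in \cite{IBK} does. Likewise for (ii): embedding $\pi_+\Oc_X$ into $\pi_+\Oc_U$ via the flat localization $\Oc_X\hookrightarrow\Oc_U$ at $\Delta^{2}$, restricting scalars along $\D_Y\subset\D_V$, and composing with $T$ is exactly the right move. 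The ``main obstacle'' you flag (the sign and normalization bookkeeping in the equivariance check) is indeed where the content lies, and it does work out as you describe.
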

\begin{proof}
See \cite[Lemma 2.3]{IBK}.
\end{proof}
It is more convenient to study $T(\pi_+(O_X))\cong \pi_+(O_X)$, as a submodule of $O_U$, than using the definition of $\pi_+(O_X)$. Therefore to reach our goal, we will first  study the decomposition of $O_U$ into irreducible components as a $\D_V$-module.\\
The following proposition enables us to reduce the study of the decomposition factors of $\pi_+(O_X)$ 
to the behavior of the direct image over the complement to the branch locus, or even over the generic point. Let $j:U\hookrightarrow X$  and $i:V\hookrightarrow Y$ be the inclusions.

\begin{proposition}  \label{prop:etale.semisimple2}
Let $\pi: X\to Y$ be a finite map. Then
\begin{itemize}
\item[(i)]
 $\pi_+(O_X)$ is semi-simple as a $\D_Y$-module.
\item[(ii)] If $\pi_+(O_X)=\oplus M_i,\ i\in I$ is a decomposition into simple
(non-zero) $D_Y$-modules, then $\pi_+(O_U)=\oplus i^+(M_i),\ i\in I$, is a decomposition of  $\pi_+(O_U)$ into simple (non-zero) $\Dc_{V}$-modules.
\end{itemize}
\end{proposition}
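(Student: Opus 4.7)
The plan is as follows. Part (i) is immediate from Theorem 2.1: $\Oc_X$ is a simple holonomic $\D_X$-module and $\pi$ is finite, hence proper, so Kashiwara's decomposition theorem gives at once that $\pi_+(\Oc_X)$ is semisimple as a $\D_Y$-module. The content lies in part (ii), which I would establish in three steps: a base change identity producing the decomposition, a torsion-freeness argument for non-vanishing, and a simplicity argument exploiting that $i$ is an open immersion.

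For the base change step, observe that the square with horizontal arrows $j:U\hookrightarrow X$, $i:V\hookrightarrow Y$ and vertical arrows $\pi|_U$, $\pi$ is Cartesian, with $\pi$ finite (hence proper) and $i,j$ open immersions. Proper base change for holonomic $\D$-modules yields $i^+\pi_+\cong\pi_+ j^+$; since $j^+(\Oc_X)=\Oc_U$ and $i^+$ is exact on holonomic modules (restriction along an open immersion is a localization), applying $i^+$ to $\pi_+(\Oc_X)=\bigoplus_{i\in I} M_i$ produces $\pi_+(\Oc_U)=\bigoplus_{i\in I} i^+(M_i)$. For non-vanishing, note that $\pi_+(\Oc_X)$ is $\Oc_Y$-torsion-free -- it is the pushforward of a torsion-free module along a finite map, and is in fact locally free of rank $\deg(\pi)$ generically. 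A simple $\D_Y$-module supported on the proper closed subvariety $Y\setminus V$ (the discriminant) would be $\Oc_Y$-torsion, so no $M_i$ can be concentrated on $Y\setminus V$. Hence each $M_i$ has full support $Y$ and $i^+(M_i)\ne 0$. For simplicity of $i^+(M_i)$, I would use that, for an open immersion $i$, restriction sends a simple holonomic $\D_Y$-module whose support meets $V$ to a simple $\Dc_V$-module: any non-trivial proper $\Dc_V$-submodule $N\subsetneq i^+(M_i)$ would lift, through the minimal extension $i_{!\ast}N\subsetneq M_i$, to a non-trivial proper $\D_Y$-submodule of $M_i$, contradicting its simplicity.

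The main obstacle is the last step, the preservation of simplicity under the open restriction $i^+$, since it implicitly invokes the minimal-extension formalism for holonomic $\D$-modules. In the present situation one may bypass this machinery and argue concretely: $\pi|_U:U\to V$ is \'etale of degree $|\Sc_{n_1}\times\cdots\times\Sc_{n_r}|$, so $\pi_+(\Oc_U)$ is locally free of finite rank over $\Oc_V$ with flat connection, and its $\Dc_V$-isotypic decomposition matches the irreducible decomposition of the regular representation of $\Sc_{n_1}\times\cdots\times\Sc_{n_r}$, as exploited in \cite{IBK}. The remaining steps (base change and torsion-freeness) are routine bookkeeping once Kashiwara's theorem is in hand.
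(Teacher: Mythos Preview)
The paper does not supply its own argument here; its entire proof is the one-line reference ``See \cite[Proposition~2.8]{IBK}.'' Your outline is correct and is the standard way such a statement is established: (i) is immediate from Kashiwara's theorem since $\Oc_X$ is simple holonomic and $\pi$ is finite (hence proper); for (ii), base change along the Cartesian square gives $\pi_+(\Oc_U)\cong i^+\pi_+(\Oc_X)=\bigoplus i^+(M_i)$, torsion-freeness of $\pi_+(\Oc_X)$ over $\Oc_Y$ rules out any $M_i$ supported on the discriminant, and simplicity survives open restriction by the minimal-extension (or, equivalently, the $M_i\hookrightarrow i_+i^+M_i$) argument.

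Two remarks. First, in this specific affine, finite setting the base-change step is entirely concrete and does not require the general six-functor formalism: as an $\Oc_Y$-module $\pi_+(\Oc_X)$ is just $\Oc_X$, and $i^+$ is localization at $\Delta^2$, so $i^+\pi_+(\Oc_X)=\Oc_X[\Delta^{-2}]=\Oc_U=\pi_+(\Oc_U)$. Second, your closing alternative---bypassing intermediate extensions via the \'etale Galois correspondence over $V$---is exactly the ``elementary and wholly algebraic proof, using essentially the generic correspondence with the differential Galois group'' that the introduction of the paper advertises and that \cite{IBK} develops; so your proposal is in the same spirit as, and likely a faithful reconstruction of, the cited source.
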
 
\begin{proof}
See \cite[Proposition 2.8]{IBK}.
\end{proof}
\subsection{Notation}
Let $\displaystyle \Dc_X:=\bC \langle x_1, \ldots, x_n, \frac{\partial}{\partial x_1}, \ldots, \frac{\partial}{\partial x_n} \rangle$ be the Weyl algebra associated with the polynomial ring $\Oc_X$, and $\Oc_Y:=\Oc_X^{\Sc_{n_1} \times \cdots \times \Sc_{n_r}}= \bC[y_1,\ldots,y_n]$ be the ring of invariant polynomials under the group $\Sc_{n_1} \times \cdots \times \Sc_{n_r}$. We denote by $\Dc_Y= \bC \langle y_1, \ldots, y_n, \frac{\partial}{\partial y_1}, \ldots, \frac{\partial}{\partial y_n} \rangle $ the Weyl algebra associated with  $\Oc_Y$. 
We have 
 $\displaystyle \OO =\bC [x_1,\ldots,x_n, \Delta^{-1}],\  \displaystyle \Oc_{V} =\bC [y_1,\ldots,y_n, \Delta^{-2}],$ and $$  \DD = \bC \langle y_1, \ldots, y_n, \frac{\partial}{\partial y_1}, \ldots, \frac{\partial}{\partial y_n}, \Delta^{-2} \rangle.$$
 We adopt the following notations  for $i =1,\ldots, r$
 $$ \Delta_i = \prod_{  n_1+ \cdots + n_{i-1} +1 \leq i<j \leq { n_1+ \cdots + n_i}} (x_i-x_j)$$
$$\displaystyle \Oc_{X_i} :=\bC [x_{ n_1+ \cdots + n_{i-1} +1}, \ldots, x_{ n_1+ \cdots + n_i} \Delta_i^{-1}],$$
$$\displaystyle \Oc_{Y_i} :=\bC [y_{ n_1+ \cdots + n_{i-1} +1}, \ldots, y_{ n_1+ \cdots + n_i} \Delta_i^{-2}],\ \mbox{and}$$
$$   \Dc_{Y_i}:= \bC \langle y_{ n_1+ \cdots + n_{i-1} +1}, \ldots, y_{ n_1+ \cdots + n_i}, \frac{\partial}{\partial y_{ n_1+ \cdots + n_{i-1} +1}}, \ldots, \frac{\partial}{\partial y_{ n_1+ \cdots + n_i}}, \Delta_i^{-2} \rangle.$$

Then  we have $ \OO=\Oc_{X_1} \otimes \cdots \otimes  \Oc_{X_r},$  $ \Oc_V=\Oc_{Y_1} \otimes \cdots \otimes  \Oc_{Y_r},$ $\DD = \Dc_{Y_1} \otimes \cdots \otimes  \Dc_{Y_r}$ and $\displaystyle \Delta= \prod \Delta_i.$

For $M_i$ a $\Dc_{Y_i}$-module $i=1,\ldots,r$, we make $M_1\otimes \cdots \otimes M_r$ into a $\DD $-module by setting
\begin{equation} (D_1 \otimes \cdots \otimes  D_r) (m_1 \otimes \cdots \otimes m_r)= D_1 m_1 \otimes \cdots \otimes D_r m_r \end{equation}  
for $D_i \in \Dc_{Y_i}$ and $m_i \in \Oc_{X_i}, i=1\ldots,r.$
\begin{lemma}
 $\OO$ is a $\DD$-module. 
\end{lemma}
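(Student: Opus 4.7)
The plan is to reduce the statement to the single-symmetric-group case for each block and then assemble via the tensor product decompositions $\OO = \Oc_{X_1}\otimes\cdots\otimes\Oc_{X_r}$ and $\DD = \Dc_{Y_1}\otimes\cdots\otimes\Dc_{Y_r}$ already recorded in the notation section. Concretely, I would first establish that each factor $\Oc_{X_i}$ carries a natural $\Dc_{Y_i}$-module structure; given this, the formula
\[
(D_1\otimes\cdots\otimes D_r)(f_1\otimes\cdots\otimes f_r) = D_1 f_1\otimes\cdots\otimes D_r f_r
\]
displayed just before the lemma defines an action of the tensor product algebra $\DD$ on the tensor product module $\OO$, and its compatibility with the commutation relations in $\DD$ follows block-by-block since operators from distinct blocks commute and act on disjoint sets of variables.

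For the single-block step, I would argue as in \cite{IBK}. The invariants $y_{n_1+\cdots+n_{i-1}+1},\ldots,y_{n_1+\cdots+n_i}$ are (up to sign) the elementary symmetric polynomials in the variables $x_{n_1+\cdots+n_{i-1}+1},\ldots,x_{n_1+\cdots+n_i}$, and the Jacobian of the corresponding morphism $\pi_i$ is (up to sign) the Vandermonde $\Delta_i$. The multiplication action of $\Oc_{Y_i}$ on $\Oc_{X_i}$ is built in, so it remains to define the action of each $\partial/\partial y_j$. Because $\Delta_i$ is invertible in $\Oc_{X_i}$, the Jacobian matrix is invertible over $\Oc_{X_i}$, and the chain rule
\[
\frac{\partial}{\partial y_j} = \sum_{k} (J_i^{-1})_{jk}\,\frac{\partial}{\partial x_k}
\]
furnishes a derivation on $\Oc_{X_i}$ extending the one on $\Oc_{Y_i}$. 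The Leibniz rule and the fact that $[\partial/\partial y_j,\partial/\partial y_l]=0$ follow from the analogous identities among the $\partial/\partial x_k$ together with the torsion-freeness of $\Omega^1_{\Oc_{X_i}/\bC}$ after inverting $\Delta_i$; equivalently, one sees directly that $\pi_i$ is étale on the complement of the vanishing locus of $\Delta_i$, so derivations lift uniquely.

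Finally, I would verify that the $\Dc_{Y_i}$-action thus defined respects the localization by $\Delta_i^{-2}$: since $\partial/\partial y_j$ sends $\Oc_{X_i}$ into itself and $\Delta_i^{-1}\in\Oc_{X_i}$ is already a legitimate element, iterating $\partial/\partial y_j$ keeps us inside $\Oc_{X_i}$, and the symbol $\Delta_i^{-2}$ acts as multiplication by the element $\Delta_i^{-2}\in\Oc_{X_i}$. With each $\Oc_{X_i}$ now a $\Dc_{Y_i}$-module, the tensor product construction yields the required $\DD$-module structure on $\OO$, and the identity $\Delta = \prod_i \Delta_i$ shows the localizations match.

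The only genuine point to check is the well-definedness of $\partial/\partial y_j$ on $\Oc_{X_i}$; the rest is formal. This is precisely where the localization by $\Delta_i$ is used, since $J_i^{-1}$ has entries in $\Oc_{X_i}[\Delta_i^{-1}]=\Oc_{X_i}$ but not in the non-localized polynomial ring. I expect this Jacobian-inversion step to be the main (and essentially the only) technical point.
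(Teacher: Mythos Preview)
Your approach is correct and coincides with the paper's: both reduce to the single-block statement that each $\Oc_{X_i}$ is a $\Dc_{Y_i}$-module and then conclude via the tensor-product formula (3.1). The only difference is that where you sketch the Jacobian-inversion/\'etale argument for the single block, the paper simply cites \cite[Lemma~3.1]{DOS} for that fact.
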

\begin{proof}
Since by \cite[Lemma 3.1]{DOS} $\Oc_{X_i}$ is a $\Dc_{Y_i}$-module for $i=1, \ldots, r$, it clearly  follows from equality (3.1).
\end{proof}

Let $V_i $ be a $\bC[\Sc_{n_i}]$-module $(i=1,\ldots,r)$, we define the action $\bC[\Sc_{n_1} \times \cdots \times \Sc_{n_2}]$ on   $V_1\otimes \cdots \otimes V_r$  by
$$(s_1 \times \cdots \times s_r) (v_1 \otimes \cdots  \otimes  v_r)= s_1 v_1 \otimes  \cdots \otimes s_r v_r,$$ for $s_i \in \Sc_{n_i}, v_i \in V_i, 1,\ldots ,r.$
This makes $V_1\otimes \cdots \otimes  V_r$  into $\bC[\Sc_{n_1} \times \cdots \times \Sc_{n_r}].$ If $V_i $ is an irreducible $\bC[\Sc_{n_i}]$-module $(i=1,\ldots, r)$, then $V_1\otimes \cdots \otimes V_r$ is an irreducible  $\bC[\Sc_{n_1} \times \cdots \times \Sc_{n_r}]$-module and all irreducible  $\bC[\Sc_{n_1} \times \cdots \times \Sc_{n_r}]$-modules have this form\cite[Chapter IV \S 27]{Curtis Reiner}.\\
We   know  by  \cite[Proposition 3.2]{End-Muk} that: 
\begin{equation}  \bC[\Sc_{n_1} \times \cdots \times \Sc_{n_r}]  \cong \bC[\Sc_{n_1}]  \otimes \cdots \otimes \bC[\Sc_{n_r}].
\end{equation} A basis over $\bC$ of $ \bC[\Sc_{n_1} \times \cdots \times \Sc_{n_r}]$ is given 
$\C=\{ F_{T}^{S}; S \in ST(\lambda), T \in NST(\lambda) | \lambda \in \Pc_{r,n} \}$. For every couple $(\la, S) \in \Pc_{r,n} \times ST(\la)$ corresponds an irreducible  $\Sc_{n_1} \times \cdots \times \Sc_{n_r}$-representation $V^S(\la).$  For $i=1,\ldots, r$ a basis over $\bC$ of $\bC[\Sc_{n_i}]$ is given by $\C_i=\{ F_{T^i}^{S^i}; S^i , T^i \in ST(\lambda) | \lambda \vdash n_i \}.$ For every couple $(\la_i, S^i) \in \Pc_{1,n_i} \times ST(\la_i)$ corresponds an irreducible  $\Sc_{n_i}$-representation $V^{S^i}(\la_i),\ i=1\ldots, r$,  \cite{Tera-Yama}. 

\begin{lemma}[Identification map]
For $T=(T^1 \ldots, T^r) \in NST(\la)$ and $S=(S^1, \ldots, S^r) \in ST(\la)$, define the linear map $\varphi: \bC[\Sc_{n_1} \times \cdots \times \Sc_{n_r}] \to  \bC[\Sc_{n_1}]  \otimes \cdots \otimes \bC[\Sc_{n_r}]$ by $\varphi (F_T^S)= F_{T^1}^{S^1}\otimes \cdots \otimes F_{T^r}^{S^r}.$ Then $\varphi$ is a $\bC[\Sc_{n_1} \times \cdots \times \Sc_{n_r}]$-isomorphism.
Moreover we have  $$\varphi(V^S(\la))= V^{S^1}(\la_1) \otimes \cdots \otimes V^{S^r}(\la_r).$$
\end{lemma}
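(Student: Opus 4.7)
The plan is to verify the lemma at the level of bases. By the single-factor case of Theorem 2.1, a basis of the codomain $\bC[\Sc_{n_1}] \otimes \cdots \otimes \bC[\Sc_{n_r}]$ is given by the tensor products $F_{T^1}^{S^1} \otimes \cdots \otimes F_{T^r}^{S^r}$, indexed by $r$-tuples of partitions $(\la_1, \ldots, \la_r)$ together with, for each $i$, pairs $(S^i, T^i)$ of standard tableaux of shape $\la_i$. Writing $c_i = n_1 + \cdots + n_{i-1}$, subtracting $c_i$ from the entries of the $i$-th component gives a natural bijection $NST(\la) \leftrightarrow \prod_{i=1}^r ST(\la_i)$, and $S = (S^1, \ldots, S^r)$ decomposes componentwise. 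Under this identification, the map $\varphi$ sends the higher Specht basis of the domain bijectively onto the tensor basis, and is therefore a $\bC$-linear isomorphism.

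To show that $\varphi$ intertwines the actions of $\Sc_{n_1} \times \cdots \times \Sc_{n_r}$, I would observe that $\sigma = (\sigma_1, \ldots, \sigma_r)$ acting through its embedding in $\Sc_n$ permutes only the variables in each block $\{x_{c_i+1}, \ldots, x_{c_i+n_i}\}$ separately. Since $T$ is natural, each $T^i$ is supported in the $i$-th block, so formula (2.2) gives $\eb_T = \eb_{T^1} \cdots \eb_{T^r}$, with each $\eb_{T^i}$ acting only on the $i$-th block. Combined with the fact that the word $w(T)$ is the concatenation $w(T^1) \cdots w(T^r)$, this yields the blockwise factorization
\begin{equation*}
F_T^S(x_1, \ldots, x_n) = \prod_{i=1}^r F_{T^i}^{S^i}(x_{c_i+1}, \ldots, x_{c_i+n_i}),
\end{equation*}
from which $\varphi(\sigma \cdot F_T^S) = \sigma \cdot \varphi(F_T^S)$ is immediate.

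For the ``moreover'' statement, restricting $\varphi$ to $V^S(\la) = \spano \{F_T^S : T \in NST(\la)\}$ yields the subspace spanned by $\{F_{T^1}^{S^1} \otimes \cdots \otimes F_{T^r}^{S^r} : T^i \in ST(\la_i)\}$, which is a basis of $V^{S^1}(\la_1) \otimes \cdots \otimes V^{S^r}(\la_r)$ by the single-factor case of Theorem 2.1. The main obstacle is the blockwise factorization of $F_T^S$ displayed above: one must verify that the globally defined index $i(S)$, when restricted to each block, yields the correct per-block index so that the factor $F_{T^i}^{S^i}$ is recovered, and that the product of the disjoint Young symmetrizers $\eb_{T^i}$ acting on monomials supported in their respective blocks produces truly independent factors. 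This step is routine but requires careful bookkeeping of the reading rules defining $w(\cdot)$ and $i(\cdot)$ for a natural $r$-tableau.
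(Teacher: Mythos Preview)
Your approach is essentially the same as the paper's, which offers only the one-line remark ``It [is] obvious that $\varphi$ is a $\bC[\Sc_{n_1}\times\cdots\times\Sc_{n_r}]$[-homomorphism] which is a bijection.'' You supply the details the paper omits: matching bases on both sides, checking equivariance via the blockwise factorization of $F_T^S$, and deducing the ``moreover'' statement by restriction. Your caveat about verifying that the globally defined index $i(S)$ restricts correctly to each block is well taken and is precisely the bookkeeping the paper sweeps under ``obvious''; once that is checked, the argument is complete.
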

\begin{proof}
It obvious that $\varphi$ is a $\bC[\Sc_{n_1} \times \cdots \times \Sc_{n_r}]$ which is a bijection.
\end{proof}
From now we will use the map $\varphi$ in Lemma 3.4 to identify elements of $\bC[\Sc_{n_1} \times \cdots \times \Sc_{n_r}] $ with elements of $\bC[ \Sc_{n_1}] \otimes \cdots \otimes \bC[\Sc_{n_r} ].$

\subsection{Simple components and their multiplicities}
Let  $\lambda \in \Pc_{r,n}$ be an $r$-diagram of size $n$ and  $ T \in NST(\Lambda)$ a natural standard tableau and let the  $\eb_{T}$ be as in (2,2). The element $\eb_T$ is a primitive idempotents $\bC[\Sc_{n_1} \times \cdots \times \Sc_{n_r}]$ and each primitive idempotent of $\bC[\Sc_{n_1} \times \cdots \times \Sc_{n_r}]$ is associated with a natural standard tableau \cite[chapter V,$\S$ 10]{Weyl}. $\{ \eb_T ; T\in NST(\lambda), \lambda \in \Pc_{r,n}  \}$ is the complete list of all primitive idempotents of $\bC[\Sc_{n_1} \times \cdots \times \Sc_{n_r}].$ \\

For $i =1,\ldots, r$ let $\lambda_i \vdash n_i$;  the canonical standard tableau $S_0^i$ of shape $\lambda_i$  is the unique  $\lambda_i$-tableau  whose  cells are numbered from the left to the right in successive rows, starting from the top. Let $T^i$ be a $\lambda_i$-standard tableau, we denote by $F_{T^i}$ the ordinary Specht polynomial associated with $T^i$ \cite{Peel}. Then  the higher Specht polynomial  $F_{T^i}^{S_0^i}$ is proportional to the Specht polynomial $F_{T^i}$  \cite{Tera-Yama} .
The following theorem is the analog  of \cite[Theorem 3]{RTG} for product of symmetric groups.
\begin{theorem}
Let   $\lambda \in \Pc_{r,n}$ be an $r$-diagram of size $n$, $T \in NST(\lambda)$ a natural standard tableau of shape $\lambda$, and  $\eb_{T}$ is the  primitive idempotent associated with $T$. Then we have :
\begin{enumerate}
\item $ \eb_{T}\OO$ is a nontrivial  $\DD$-submodule of $\OO,$
\item The $\DD$-module $\eb_{T}\OO$ is simple,
\item There exist a  $ S  \in ST(\lambda)$ and a higher  Specht polynomial $F_{T}^{S}$ for $(T, S)$   such that $\eb_{T} \OO= \DD F_{T}^{S}.$
\end{enumerate}
\end{theorem}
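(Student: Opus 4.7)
My strategy is to reduce all three claims to the single-group case $r=1$, which is \cite[Theorem 3]{RTG}, by exploiting the tensor-product decompositions of $\OO$, $\DD$ and $\bC[\Sc_{n_1}\times\cdots\times\Sc_{n_r}]$ recorded above. Using the identification map $\varphi$ of Lemma 3.4, write $\eb_T=\eb_{T^1}\otimes\cdots\otimes\eb_{T^r}$ inside $\bC[\Sc_{n_1}]\otimes\cdots\otimes\bC[\Sc_{n_r}]$, where each $\eb_{T^i}$ is the primitive idempotent of $\bC[\Sc_{n_i}]$ attached to $T^i\in NST(\la^i)$. Since $\Sc_{n_i}$ acts only on the variables of $\Oc_{X_i}$, I would first verify the factorization
\[
\eb_T\OO\;=\;(\eb_{T^1}\Oc_{X_1})\otimes\cdots\otimes(\eb_{T^r}\Oc_{X_r}),
\]
after which the $r=1$ version of the theorem supplies, for each $i$, a standard tableau $S^i\in ST(\la^i)$ with $\eb_{T^i}\Oc_{X_i}=\Dc_{Y_i}F_{T^i}^{S^i}$, and the fact that this submodule is a nonzero simple $\Dc_{Y_i}$-module.

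Claim (1) then follows at once: the tensor product of nonzero $\bC$-subspaces is nonzero, and by the action rule (3.1) the subspace $\eb_T\OO$ is stable under $\DD=\Dc_{Y_1}\otimes\cdots\otimes\Dc_{Y_r}$. Claim (3) is also formal once the factorization is known: setting $S=(S^1,\ldots,S^r)\in ST(\la)$, under $\varphi$ we have $F_T^S=F_{T^1}^{S^1}\otimes\cdots\otimes F_{T^r}^{S^r}$ (up to a nonzero scalar coming from the definition of $\eb_T$), hence
\[
\DD F_T^S\;=\;(\Dc_{Y_1}F_{T^1}^{S^1})\otimes\cdots\otimes(\Dc_{Y_r}F_{T^r}^{S^r})\;=\;\eb_T\OO.
\]

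Claim (2) is the main obstacle: I need to argue that the external tensor product of the simple $\Dc_{Y_i}$-modules $\eb_{T^i}\Oc_{X_i}$ is simple as a $\DD$-module. For $\D$-modules on smooth affine varieties this is a standard consequence of the K\"unneth formula for holonomic modules applied to the product situation $V=V_1\times\cdots\times V_r$, which is exactly how $\DD$ and $\Oc_V$ decompose here. I would therefore invoke that statement, after noting that each $\eb_{T^i}\Oc_{X_i}$ is holonomic (being a nonzero $\Dc_{Y_i}$-submodule of the holonomic module $\Oc_{X_i}$). Alternatively, and more self-containedly, I would give a direct separation-of-variables argument: any nonzero $\DD$-submodule $N\subseteq\eb_T\OO$ contains some element $\sum_k u_1^{(k)}\otimes\cdots\otimes u_r^{(k)}$, and acting by operators of the form $D_1\otimes 1\otimes\cdots\otimes 1$ with $D_1\in\Dc_{Y_1}$ and using simplicity of $\eb_{T^1}\Oc_{X_1}$ one shortens the sum; iterating across all factors one recovers a pure tensor, and then repeated application of operators of shape $1\otimes\cdots\otimes D_i\otimes\cdots\otimes 1$ together with simplicity in each factor produces all of $\eb_T\OO$. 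The delicate point is checking that this reduction does not lose the idempotent structure, which follows because $\eb_T$ commutes with the $\DD$-action and stays a projector on each stage of the reduction.
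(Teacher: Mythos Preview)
Your proposal is correct and follows essentially the same route as the paper: factor $\eb_T\OO$ as $\eb_{T^1}\Oc_{X_1}\otimes\cdots\otimes\eb_{T^r}\Oc_{X_r}$ and apply \cite[Theorem~3]{RTG} componentwise. The only differences are cosmetic: the paper simply asserts without further argument that a tensor product of simple $\Dc_{Y_i}$-modules is a simple $\DD$-module (so your extra justification for (2) is more careful than the original), and in (3) it picks the specific $S=S_0=(S_0^1,\ldots,S_0^r)$ of canonical tableaux, using that $F_{T^i}^{S_0^i}$ is proportional to the ordinary Specht polynomial $F_{T^i}$.
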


\begin{proof}
Let $\lambda \in \Pc_{r,n}$ be an $r$-diagram of size $n$ and  $T \in NST(\lambda)$ , There exist $n_1, \ldots, n_r \in \mathbb N, \lambda^i \vdash n_i,$ and  $T_i\in ST(\lambda_i),\ i=1,\ldots,r$ such that $$ \sum n_i=n,\ \lambda= (\lambda^1, \ldots, \lambda^r) \ \mbox{and}\  \eb_{T}= \eb_{T^1} \cdots \eb_{T^r}.$$ 
\begin{enumerate}
\item We know that $e_{T^i}$ is an primitive idempotent for $\Cb[\Sc_{n_i}], i=1,\ldots,r$ and    $\eb_{T} \OO = (\eb_{T^1} \cdots \eb_{T^r} )\OO= \eb_{T^1} \Oc_{X_1} \otimes \cdots \otimes \eb_{T^r} \Oc_{X_r}.$ By \cite[Theorem 3]{RTG},  $\eb_{T^i} \Oc_{X_i}$  is a nontrivial  $\Dc_{Y_i}$ module for $i=1,\ldots,r$. Hence $\eb_{T} \OO$ is a nontrivial module over $\DD.$
\item Since $e_{T^i}$  being a primitive idempotent for $\Cb[\Sc_{n_i}], i=1,\ldots, r$, by \cite[Theorem 3]{RTG}, we have that $\eb_{T^i} \Oc_{X_i}$ is a simple $\Dc_{Y_i}$-modules for $i=1,\ldots,r$. Then $\eb_{T^1} \Oc_{X_1} \otimes \cdots \otimes \eb_{T^r} \Oc_{X_r}$ is an irreducible $\Dc_{Y_1} \otimes \cdots \otimes  \Dc_{Y_r}$-module. Hence $\eb_{T} \OO$ is a simple $\DD$-module. 
\item Let   $S_0^i$ be the canonical standard tableau of  shape $\lambda_i, i=1,\ldots, r$ and we know that  the higher Specht polynomial $F_{T^i}^{S_0^i},$  is proportional to the Specht polynomial $F_{T^i}$ of $T^i,\ i=1,\ldots, r.$ Then  By \cite[Theorem 3]{RTG} we have that  $\eb_{T^i} \Oc_{X_i}=  \Dc_{Y_i} F_{T^i}, i=1,\ldots,r$ so that, 
\begin{eqnarray*}
\eb_{T} \OO&=&\eb_{T^1}  \Oc_{X_1} \otimes \cdots \otimes  \eb_{T^r} \Oc_{X_r} \\
&=& \Dc_{Y_1} F_{T^1} \otimes \cdots \otimes    \Dc_{Y_r} F_{T^r} \  \mbox{by \cite[Theorem 3 (iii)]{RTG}} \\
&=&   \Dc_{Y_1} F_{T^1}^{S_0^1} \otimes \cdots \otimes    \Dc_{Y_r} F_{T^r}^{S_0^r} \\
&=& (\Dc_{Y_1} \otimes \cdots \otimes  \Dc_{Y_r} ) ( F_{T^1}^{S_0^1} \otimes \cdots \otimes   F_{T^r}^{S_0^r}) \\
&=& \DD F_{T}^{S_0} \  \  \mbox{by\ the identification\ map\ Lemma\ 3.3}
\end{eqnarray*}
$\mbox{where}\  T=(T^1, \ldots, T^r) \  \mbox{and}\  S_0=(S_0^1, \ldots, S_0^r).$
\end{enumerate}
\end{proof}
From now we adopt the following notation.
Let  $\lambda$ be an $r$ -diagram of size $n$, $ T \in NST(\lambda) $, put $F_T:=F_T^{S_0}$ where $S_0=(S_0^1, \ldots, S_0^r)$ so that $\eb_T \OO =\DD F_T.$ We denote by $F_{\lambda}$ the unique higher Specht polynomial $F_{S_0}^{S_0}.$

\begin{corollary}

With the above notations, $\eb_{T_1}\OO \cong_{\DD} \eb_{T_2} \OO$ if $T_1$ and $T_2$  have the same shape i.e. if there is a $r$-diagram $\lambda $ of size $n$ such that $ T_1, T_2 \in NST(\lambda ).$
\end{corollary}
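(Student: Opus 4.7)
The plan is to exhibit an explicit $\DD$-linear isomorphism $\eb_{T_1}\OO \to \eb_{T_2}\OO$ realized by multiplication by a suitable element of the group algebra $\bC[\Sc_{n_1}\times\cdots\times\Sc_{n_r}]$. The key observation is that any two natural standard $r$-tableaux of the same shape differ by a permutation in this group, and that the action of this group on $\OO$ commutes with the action of $\DD$.

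First I would produce such a group element. Writing $T_j = (T_j^1,\ldots,T_j^r)$ for $j=1,2$, each pair $T_1^i, T_2^i$ consists of standard tableaux of the same shape $\lambda^i$ whose entries fill precisely the block $\{n_1+\cdots+n_{i-1}+1,\ldots,n_1+\cdots+n_i\}$. Hence some $\sigma_i\in\Sc_{n_i}$ sends $T_1^i$ to $T_2^i$, and $\sigma:=(\sigma_1,\ldots,\sigma_r)\in\Sc_{n_1}\times\cdots\times\Sc_{n_r}$ sends $T_1$ to $T_2$. From the defining formula for the Young symmetrizer, conjugation by $\sigma_i$ carries the row- and column-stabilizers of $T_1^i$ onto those of $T_2^i$, so $\sigma_i\eb_{T_1^i}\sigma_i^{-1} = \eb_{T_2^i}$; taking the product over $i$ gives $\sigma\,\eb_{T_1}\,\sigma^{-1} = \eb_{T_2}$.

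Next I would consider the map $\psi:\eb_{T_1}\OO \to \OO$ defined by $\psi(f)=\sigma\cdot f$. For $f=\eb_{T_1}g$ one computes $\sigma f = (\sigma\eb_{T_1}\sigma^{-1})(\sigma g) = \eb_{T_2}(\sigma g)\in \eb_{T_2}\OO$, so $\psi$ factors through $\eb_{T_2}\OO$; it is nonzero with inverse given by multiplication by $\sigma^{-1}$. Provided $\psi$ is $\DD$-linear, the preceding theorem guarantees that both source and target are simple $\DD$-modules, and a nonzero $\DD$-map between simple modules is automatically an isomorphism.

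The main obstacle is the verification that $\psi$ is $\DD$-linear, i.e.\ that the action of $\Sc_{n_1}\times\cdots\times\Sc_{n_r}$ on $\OO$ commutes with every operator of $\DD$. The invariant generators $y_j$ and $\Delta^{-2}$ present no difficulty; the content lies in checking that each $\partial/\partial y_j$, expressed on $\OO$ through the inverse of the Jacobian matrix, acts as a $G$-invariant rational differential operator. The factorizations $\OO = \Oc_{X_1}\otimes\cdots\otimes\Oc_{X_r}$ and $\DD = \Dc_{Y_1}\otimes\cdots\otimes\Dc_{Y_r}$ reduce this to the single symmetric group case already treated in \cite{IBK}.
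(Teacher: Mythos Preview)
Your argument is correct, but it proceeds along a genuinely different line from the paper's own proof. The paper factors everything through the tensor decomposition: writing $\eb_{T_k}\OO = \eb_{T_k^1}\Oc_{X_1}\otimes\cdots\otimes\eb_{T_k^r}\Oc_{X_r}$ and $\DD=\Dc_{Y_1}\otimes\cdots\otimes\Dc_{Y_r}$, it invokes \cite[Corollary~2]{RTG} to obtain $\eb_{T_1^i}\Oc_{X_i}\cong_{\Dc_{Y_i}}\eb_{T_2^i}\Oc_{X_i}$ for each $i$ and then tensors these isomorphisms. No explicit map is produced; the work is delegated entirely to the cited single-factor result. Your approach instead exhibits a concrete $\DD$-isomorphism, namely the action of the group element $\sigma$ carrying $T_1$ to $T_2$, and the only external input you need is the commutation of the $G$-action with $\DD$ (which in the paper is quoted from \cite[Corollary~3.5]{DOS} rather than \cite{IBK}). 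Your route is more explicit and essentially self-contained; the paper's is shorter but less informative. One small redundancy: since multiplication by $\sigma^{-1}$ is $\DD$-linear by the same commutation argument and is visibly a two-sided inverse, the appeal to simplicity and Schur's lemma at the end is unnecessary.
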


\begin{proof}
Let $T_1, T_2 \in NST(\lambda)$ with $T_1= (T_1^1,\ldots,T_1^r)$ and $ T_2=(T_2^1, \ldots, T_2^r)$. Then $\eb_{T_1}= \eb_{T_1^1} \cdots \eb_{T_1^r}$ and $\eb_{T_2}= \eb_{T_2^1} \cdots \eb_{T_2^r}$, so that $ \eb_{T_k}\OO  \cong \eb_{T_k^1} \Oc_{X_1} \otimes \cdots \otimes  \eb_{T_k^r} \Oc_{X_r},$ $k=1,2.$ By \cite[Corollary 2]{RTG}, we know that 
$\eb_{T_1^i} \Oc_{X_i} \cong_{\Dc_{Y_i}} \eb_{T_2^i} \Oc_{X_i}$ if $T_1^i$ and $T_2^i$  have the same shape, $i=1,\ldots,r$. Hence $\eb_{T_1}\OO \cong_{\DD} \eb_{T_2} \OO$ if $T_1$ and $T_2$  have the same shape.
\end{proof}

\begin{proposition}
Let  $\lambda=(\la^1, \ldots, \la^r)$ be an $r$ -diagram of size $n$, $ T \in NST(\lambda) $, and $\eb_{T}$ the  primitive idempotent associated  with $T$ 
Then with the notation above, we have: 
\begin{enumerate}
\item  \begin{equation}
  \displaystyle \OO=  \bigoplus_{\lambda \in \Pc_{r,n}} \bigg( \bigoplus_{{ T\in NST(\lambda)}}  \DD F_{T} \bigg),
  \end{equation}
\item 
\begin{equation}
  \displaystyle \OO \cong  \bigoplus_{\lambda \in \Pc_{r,n}}    f^\lambda \DD F_{\lambda},
  \end{equation} 
  where $\displaystyle f^\la = \dim_{\bC} (V^{S_0}(\la)).$
 \end{enumerate}
 
\end{proposition}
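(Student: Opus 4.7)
The plan is to build the decomposition as a direct consequence of the Wedderburn decomposition of the group algebra, transported to $\OO$ via primitive idempotents and identified with the $\DD$-modules appearing in Theorem 3.5.

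First I would recall, from the paragraph preceding Theorem 3.5 (citing Weyl), that $\{\eb_T : \lambda\in\Pc_{r,n},\ T\in NST(\lambda)\}$ is a \emph{complete} list of primitive idempotents in $\bC[\Sc_{n_1}\times\cdots\times\Sc_{n_r}]$; as such, they are mutually orthogonal and satisfy $\sum_{\lambda,T}\eb_T = 1$. Since the generators $y_i$, $\partial/\partial y_i$, and $\Delta^{-2}$ of $\DD$ are all invariant under $\Sc_{n_1}\times\cdots\times\Sc_{n_r}$, the $\DD$-action on $\OO$ commutes with the group action, so each $\eb_T$ is a $\DD$-linear projector. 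Applying this complete system of orthogonal projectors to $\OO$ then produces a $\DD$-module decomposition
\[
\OO \;=\; \bigoplus_{\lambda\in\Pc_{r,n}}\bigoplus_{T\in NST(\lambda)} \eb_T\OO,
\]
and substituting the identification $\eb_T\OO = \DD F_T$ from Theorem 3.5(iii) (with the convention $F_T:=F_T^{S_0}$ introduced after the theorem) gives assertion (1).

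For (2), the Corollary following Theorem 3.5 tells us that the summands $\eb_T\OO$ attached to tableaux of a common shape $\lambda$ are pairwise $\DD$-isomorphic, hence each is isomorphic to $\DD F_\lambda = \DD F_{S_0}^{S_0}$. Grouping the summands of (1) by shape yields
\[
\OO \;\cong\; \bigoplus_{\lambda\in\Pc_{r,n}} |NST(\lambda)|\cdot \DD F_\lambda,
\]
so it remains to verify $|NST(\lambda)| = f^\lambda$. A natural standard $r$-tableau is a tuple $(T^1,\ldots,T^r)$ in which each $T^i$ ranges independently over standard tableaux of shape $\lambda^i$ on its prescribed block of entries, so $|NST(\lambda)|=\prod_{i=1}^r f^{\lambda^i}$; meanwhile Lemma 3.4 identifies $V^{S_0}(\lambda) \cong V^{S_0^1}(\lambda^1)\otimes\cdots\otimes V^{S_0^r}(\lambda^r)$, whence $f^\lambda = \dim_{\bC} V^{S_0}(\lambda) = \prod_i f^{\lambda^i}$. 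The two counts agree, completing the equality.

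The one conceptual point that requires care, and which I view as the main obstacle, is justifying that the primitive-idempotent splitting is genuinely a splitting of $\DD$-modules and not merely of $\bC$-vector spaces. This reduces to the $(\Sc_{n_1}\times\cdots\times\Sc_{n_r})$-equivariance of the $\DD$-action on $\OO$, which is immediate from the invariance of the generators of $\DD$ but deserves an explicit mention. Once this equivariance is in place, the proposition is just an assembly of Theorem 3.5, its corollary, and the combinatorial identity $|NST(\lambda)|=f^\lambda$ above.
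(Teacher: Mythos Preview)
Your proof is correct and follows essentially the same route as the paper: both use the decomposition $1=\sum_{\lambda,T}\eb_T$ to split $\OO$ into the pieces $\eb_T\OO$, invoke Theorem~3.5(3) to identify each piece with $\DD F_T$, and then group by shape via Corollary~3.6. The paper's version is slightly terser---it relies on Theorem~3.5(1) for the fact that $\eb_T\OO$ is a $\DD$-submodule rather than re-arguing equivariance, and it does not spell out the count $|NST(\lambda)|=\prod_i f^{\lambda^i}=f^\lambda$ as you do---but the substance is the same.
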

\begin{proof}
\begin{enumerate}
\item
Since  $ \displaystyle 1= \sum_{ \lambda \in \Pc_{r,n}} \sum_{ T \in NST(\lambda)}  \eb_{T}$, we have   $\displaystyle\OO =\sum_{\lambda \in \Pc_{r,n}}\sum_{ T \in NST(\lambda)}  \eb_{T} \OO$. Let $m \in \eb_{T_1} \OO \cap \eb_{T_2} \OO$ with $T_1 \neq T_2$ so that $m = \eb_{T_1} m_1$ and $ m=\eb_{T_2} m_2.$ Then $\eb_{T_1} m= \eb_{T_1} \eb_{T_2} m_2 =0$, hence $m=0$. It is clear that $ \eb_{T_1} \OO \cap \eb_{T_2} \OO = \{0\}$ and  
$$  \OO = \bigoplus_{\lambda } \bigg( \bigoplus_{{T\in NST(\lambda)}} \eb_{T} \OO \bigg),$$
 where the $\eb_{T} \OO$ are simple $\DD$-modules. Since to each an $r$-tableau ${T}$ corresponds a higher Spect polynomial  $F_T$ such  that $\eb_{T} \OO= \DD F_{T}$ then $\displaystyle \OO= \bigoplus_{\lambda \in \Pc_{r,n}} \bigg( \bigoplus_{{T\in NST(\lambda)}}  \DD F_T \bigg).$ 
 \item
 By Corollary 3.6, $ \DD F_{T_j} \cong  \DD F_{T_j}$ if $T_i,T_j \in NST(\lambda)$ for some $\lambda \in \Pc_{r,n}$ and so we have $f^\la$ isomorphic copies of $ \DD F_{\la}$ in the direct sum (3.3).
 \end{enumerate}
\end{proof}
 Using Proposition 3.1 and Proposition 3.2 we get the next theorem.
\begin{theorem}
\label{thm:one}
\begin{itemize}
\item[(i)] $N_T:=\Dc_Y F_T$ is an irreducible $D_Y$-submodule of $\pi_+( \Oc_X)$.
\item[(ii)] There is a direct sum decomposition
\begin{equation}
\label{eq:direct sum} 
\pi_+ (\Oc_X)=\bigoplus_{\lambda \in \Pc_{r,n} }\bigoplus_{T\in NST(\lambda)}N_T
\end{equation}
\end{itemize}
\end{theorem}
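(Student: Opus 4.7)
The plan is to combine the explicit decomposition of $\OO$ furnished by Proposition~3.7 with the transfer results in Propositions~3.1 and 3.2, which relate $\pi_+(\Oc_X)$ to $\OO \cong \pi_+(\Oc_U)$. Both parts of the theorem will fall out once the simple summands on the two sides are matched up.

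First I would apply Proposition~3.7 to obtain
\[
\OO \;=\; \bigoplus_{\lambda \in \Pc_{r,n}} \bigoplus_{T \in NST(\lambda)} \DD F_T,
\]
a direct sum of simple $\DD$-submodules. The $\Dc_V$-module isomorphism $T : \pi_+(\Oc_U) \cong \OO$ from Proposition~3.1(i) transports this to a decomposition of $\pi_+(\Oc_U)$ into simple $\Dc_V$-submodules. On the other hand, Proposition~3.2(i) provides a semisimple decomposition $\pi_+(\Oc_X) = \bigoplus_{j \in J} M_j$ into simple $\Dc_Y$-modules, and Proposition~3.2(ii) gives the localized decomposition $\pi_+(\Oc_U) = \bigoplus_{j \in J} i^+(M_j)$ into simple $\Dc_V$-submodules. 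Krull--Schmidt uniqueness for semisimple modules then forces a bijection (up to isomorphism and reordering) between the $i^+(M_j)$ and the $\DD F_T$.

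The central step is to identify each summand $M_j$ explicitly with $N_T := \Dc_Y F_T$ for the unique $T$ satisfying $i^+(M_j) = \DD F_T$. Since $F_T \in \Oc_X$ is a polynomial, it defines an element of $\pi_+(\Oc_X)$ under the embedding coming from Proposition~3.1(ii); hence $N_T$ is a nonzero $\Dc_Y$-submodule of $\pi_+(\Oc_X)$, and its localization is $i^+(N_T) = \DD F_T$, which is simple. Because $\pi_+(\Oc_X)$ embeds (via $T$) into the integral domain $\OO$, any submodule of $\pi_+(\Oc_X)$ is $\Delta^2$-torsion-free, so the exact functor $i^+$ is faithful on its submodule lattice. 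Consequently a proper nonzero $\Dc_Y$-submodule of $N_T$ would localize to a proper nonzero submodule of $\DD F_T$, contradicting simplicity. This proves (i) and identifies $N_T$ as the unique $M_j$ with $i^+(M_j) = \DD F_T$. Summing over all $(\lambda, T)$ gives (ii).

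The main obstacle I anticipate is justifying the faithfulness of $i^+$ on submodules of $\pi_+(\Oc_X)$ together with the matching of simple summands; once this standard observation is in place, the theorem is a formal consequence of Propositions~3.1, 3.2, and~3.7.
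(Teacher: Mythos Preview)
Your proposal is correct and follows essentially the same approach as the paper: the paper's entire proof is the single sentence ``Using Proposition 3.1 and Proposition 3.2 we get the next theorem,'' and what you have written is precisely the natural unpacking of that sentence, combining the decomposition of $\OO$ from Proposition~3.7 with the transfer along Propositions~3.1 and~3.2. Your extra care about faithfulness of $i^+$ on torsion-free submodules is exactly the detail one needs to pass simplicity back from $\DD F_T$ to $\Dc_Y F_T$, which the paper leaves implicit.
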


We get in Theorm 3.8 a decomposition of the $\pi_+(\Oc_X)$ into irreducible $\Dc_Y$ modules generated by the higher Specht polynomials.

%
%
%
%

\subsection{Using correspondence between $G$-representations and $D$-modules}

Recall that if $M$ is a semi-simple module over a ring $R$, and $N$ is simple $R$-module, then the isotopic component of $M$ associated with $N$ is the sum $\sum N' \subset M$ of all $N'\subset M$ such that $N' \cong N.$  

\begin{proposition}
For $i=1,\ldots,r$, let $V({\lambda^i})$ be the Specht module corresponding to the partition $\lambda^i \vdash n_i$, $T^i$ a $\lambda^i$-standard tableau and  $M_i:=\Oc_{X_i}$ and $M_i^{\lambda^i}$ the isotopic component of $M_i$ (as $\Oc_{Y_i}$-module) associated with $V({\lambda^i})$. Then 
\begin{enumerate}
\item[(i)]  $\eb_{T^i}(V({\lambda^i}))= \{ \eb_{T^i}(m) | m \in V({\lambda^i} )\}$  is a one dimensional $\bC$-vector space.
\item[(ii)]  $M_i^{\lambda^i}$ is $\Dc_{Y_i} $-module.
\item[(iii)] $\eb_{T^i} (M_i^{\lambda^i})$ si a $\Dc_{Y_i}$-module
\end{enumerate}
\end{proposition}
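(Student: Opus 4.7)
The plan is to handle each part using the Schur--Wedderburn structure of the group algebra $\bC[\Sc_{n_i}]$ together with the commutation of $\Dc_{Y_i}$ with the $\Sc_{n_i}$-action on $\Oc_{X_i}$.

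For part (i), I would exploit the standard identification $\bC[\Sc_{n_i}] \cong \bigoplus_{\mu \vdash n_i} \End_\bC V(\mu)$. Under this Wedderburn decomposition a primitive idempotent $\eb_{T^i}$ corresponds to a rank-one idempotent in exactly one block, namely the $V(\lambda^i)$-block. In fact the Young symmetrizer construction identifies $V(\lambda^i) \cong \bC[\Sc_{n_i}]\eb_{T^i}$ as left $\bC[\Sc_{n_i}]$-modules, hence
$$
\eb_{T^i}(V(\lambda^i)) \;\cong\; \eb_{T^i}\,\bC[\Sc_{n_i}]\,\eb_{T^i},
$$
and the latter is one-dimensional over $\bC$ because $\eb_{T^i}$ is a primitive idempotent of the semisimple algebra $\bC[\Sc_{n_i}]$ whose only division-ring factor is $\bC$.

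For part (ii), the key point is that every $D \in \Dc_{Y_i}$ commutes with the action of $\Sc_{n_i}$ on $\Oc_{X_i}$. This follows from the identification underlying Proposition 3.1: $\Dc_{Y_i}$ embeds in the invariant subring $\Dc_{X_i}^{\Sc_{n_i}}$, so each of its elements is an $\Sc_{n_i}$-invariant differential operator. Now let $N \subset M_i$ be a $\bC[\Sc_{n_i}]$-submodule isomorphic to $V(\lambda^i)$ and take $D \in \Dc_{Y_i}$. The map $N \to D(N)$, $n \mapsto Dn$, is $\bC[\Sc_{n_i}]$-linear by commutativity, so by Schur's lemma $D(N)$ is either zero or again isomorphic to $V(\lambda^i)$. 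In either case $D(N) \subseteq M_i^{\lambda^i}$, and summing over all such $N$ gives $D(M_i^{\lambda^i}) \subseteq M_i^{\lambda^i}$.

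For part (iii), the same commutativity makes the argument essentially automatic. Since $\eb_{T^i} \in \bC[\Sc_{n_i}]$ commutes with every $D \in \Dc_{Y_i}$, one has $D\cdot\eb_{T^i}(m) = \eb_{T^i}(D\cdot m)$ for every $m \in M_i^{\lambda^i}$; by (ii), $D\cdot m \in M_i^{\lambda^i}$, so $D\cdot\eb_{T^i}(m) \in \eb_{T^i}(M_i^{\lambda^i})$. The main obstacle in the whole proof lies in making the commutation underlying (ii) and (iii) precise: one must specify how $\Dc_{Y_i}$, defined in the statement as a Weyl algebra in the $y$-variables, acts on $\Oc_{X_i}$ so that it commutes with $\Sc_{n_i}$. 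Either one invokes Proposition 3.1 to identify $\Dc_{Y_i}$ with $\Dc_{X_i}^{\Sc_{n_i}}$, or one expresses $\partial/\partial y_j$ in $x$-coordinates via the inverse Jacobian of $\pi$ and verifies invariance by hand. Everything else reduces to routine semisimple representation theory.
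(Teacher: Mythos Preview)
Your proof is correct and follows essentially the same line as the paper: for (i) both reduce to $\eb_{T^i}\,\bC[\Sc_{n_i}]\,\eb_{T^i}\cong\bC$ (the paper cites Boerner, you invoke Wedderburn), and (ii)--(iii) are identical Schur-lemma arguments based on the commutation of $\Dc_{Y_i}$ with $\bC[\Sc_{n_i}]$. The only discrepancy is bibliographic: the paper cites \cite[Corollary~3.5]{DOS} for that commutation, whereas your appeal to Proposition~3.1 is not quite on target (that proposition is about the transfer isomorphism $T:\pi_+(\Oc_U)\cong\Oc_U$, not about $\Dc_{Y_i}\subset\Dc_{X_i}^{\Sc_{n_i}}$); your alternative of writing $\partial/\partial y_j$ via the inverse Jacobian is exactly what underlies the cited result.
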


\begin{proof}
\begin{enumerate}
\item[(i)] In fact we have 
\begin{eqnarray*}
\eb_{T^i} V({\lambda^i}) \cong \eb_{T^i} \bC[\Sc_{n_i}] \eb_{T^i} \cong  \bC \eb_{T^i} \  \mbox{by \ \cite[Theorem 3.9]{Boerner}}
\end{eqnarray*}
\item[(ii)] We only have to prove that $\Dc_{Y_i}  M_i^{\lambda^i} \subset M_i^{\lambda^i}.$ Let $ D \in \Dc_{Y_i}$ and $N$  be a $\bC[\Sc_{n_i}]$-module isomorphic to $V({\lambda^i})$, since by \cite[Corollary 3.5]{DOS} $D$ commute with the elements of the group algebra $\bC[\Sc_{n_i}]$, $D$ is an $\bC[\Sc_{n_i}]$-homomorphism from $N$ into $D(N)$. Then by virtue of the Schur lemma $D(N)=0$ or $D(N)\cong N$ as a $\bC[\Sc_{n_i}]$-module, and $D(N) \subset M_i^{\lambda^i}$. Hence $\Dc_{Y_i}  M_i^{\lambda_i} \subset M_i^{\lambda^i}.$
\item[(iii)] Let  $D \in \Dc_{Y_i},$ we have $ D(\eb_{T^i}(M_i^{\lambda_i})) = \eb_{T^i} (D(M_i^{\lambda^i})) \subset \eb_{T^i} (M_i^{\lambda^i})$, so that  $\eb_{T^i} (M_i^{\lambda^i})$ is a $\Dc_{Y_i}$-module.
\end{enumerate}
\end{proof}

Let us recall  {the correspondence between $G$-representations and $D$-modules} \cite[Paragraph 2.4]{IBK}.
Let $L$ and $K$ be  two extensions fields a field $k$, denote by $T_{K/k}$ the $k$-linear derivations of $K$. We  say that a $T_{K/k}$-module $M$
is  {\it $L$-trivial } if $L\otimes_KM \cong L ^n $ as
$T_{L/k}$-modules. Denote by  $\Mod^{L}(T_{K/k})$ the full subcategory of finitely generated $T_{K/k}$-modules that are $L$-trivial.
 It is immediate that it is closed under taking submodules and quotient modules. Using a lifting $\phi$ , $L$ may be thought of as a $T_{K/k}$-module. 
If $G$ is a finite group let $ \Mod (k[G]) $ be the category of finite-dimensional representations of $k[G]$.
Let now $k\to K \to L$ be a tower of fields such that $K= L^G$.  
Note that the action of $T_{K/k}$ commutes with the action of $G$. If $V$ is a $k[G]$-module,  $L\otimes_kV$ is a $T_{K/k}$-module by $D(l\otimes v)=D(l)\otimes v,\quad D\in T_{K/k}$, and  $(L\otimes_kV)^G$ is a $T_{K/k}$-submodule.

 \begin{proposition}
 \label{prop:equiv}
   The functor
   \begin{displaymath}
\nabla:     \Mod (k[G]) \to \Mod (T_{K/k}), \quad V\mapsto (L\otimes_k V)^G
   \end{displaymath}
   is fully faithful, and defines an equivalence of
   categories $$ \Mod (k[G]) \to \Mod^{L}(T_{K/k}).$$ The quasi-inverse
   of $\nabla$ is the functor 
   \begin{displaymath}
     Loc : \Mod^{L}(T_{K/k}) \to \Mod (k[G]),\quad Loc (M) = (L\otimes_K M)^{\phi(T_{K/k})}.
   \end{displaymath}
\end{proposition}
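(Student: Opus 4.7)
The plan is to prove the equivalence by Galois descent combined with the differential-Galois identification of horizontal sections. The setup is that $L/K$ is a Galois extension with group $G$, and the derivations $T_{K/k}$ lift via $\phi$ to derivations on $L$ commuting with the $G$-action. I would proceed in four steps, staying at the level of categorical verifications rather than explicit computations.

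\emph{Step 1: $\nabla$ takes values in $\Mod^{L}(T_{K/k})$.} For $V$ a finite-dimensional $k[G]$-module of dimension $n$, let $N = (L\otimes_k V)^G$, regarded as a $K$-module through $K = L^G$. Faithfully flat (Galois) descent along $\Spec L \to \Spec K$ yields a canonical isomorphism $L\otimes_K N \iso L\otimes_k V$, which is an isomorphism of $T_{L/k}$-modules when $L\otimes_k V$ is given the derivation $D(\ell\otimes v)=D(\ell)\otimes v$. Since $L\otimes_k V \cong L^n$ as a $T_{L/k}$-module by construction, $N$ is $L$-trivial, so $N\in \Mod^L(T_{K/k})$.

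\emph{Step 2: $Loc$ takes values in $\Mod(k[G])$.} For $M\in \Mod^L(T_{K/k})$ the trivialization gives $L\otimes_K M\cong L^n$ as $T_{L/k}$-modules, so $(L\otimes_K M)^{\phi(T_{K/k})}$ is identified with the horizontal sections of $L^n$, i.e.\ with $k^n$. The residual $G$-action on $L\otimes_K M$ (through the first factor) preserves the horizontal sections because $G$ commutes with $\phi(T_{K/k})$ by hypothesis, making $Loc(M)$ a finite-dimensional $k[G]$-module.

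\emph{Step 3: quasi-inverse property.} For $V\in \Mod(k[G])$, the computation
\[
Loc(\nabla(V)) = \bigl(L\otimes_K (L\otimes_k V)^G\bigr)^{\phi(T_{K/k})} \cong (L\otimes_k V)^{T_{L/k}} \cong k\otimes_k V = V
\]
combines Galois descent (to drop the $G$-invariants) with the horizontal-section identification (the $T_{L/k}$-invariants of $L\otimes_k V$ are the constants $k\otimes_k V$); naturality in $V$ and $G$-equivariance are immediate. Conversely, for $M\in \Mod^L(T_{K/k})$,
\[
\nabla(Loc(M)) = \bigl(L\otimes_k (L\otimes_K M)^{\phi(T_{K/k})}\bigr)^G \cong (L\otimes_K M)^G \cong M,
\]
where the first isomorphism uses the $L$-triviality of $L\otimes_K M$ (so horizontal sections generate it over $L$) and the second is Galois descent ($K = L^G$). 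Full faithfulness follows formally from these two natural isomorphisms.

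The main obstacle is the careful bookkeeping of two commuting structures: the $G$-action and the $T_{K/k}$-action (extended to $L$ via $\phi$). One must verify that the Galois-descent isomorphism $L\otimes_K(\cdot)^G\cong (\cdot)$ is compatible with the derivation action, and that the horizontal-section identification $(L\otimes_kV)^{T_{L/k}}=V$ is compatible with the $G$-action on $L\otimes_kV$. Both compatibilities ultimately reduce to the assumption that $\phi(T_{K/k})$ commutes with $G$ on $L$, which is what allows the two filtrations (invariants under $G$, horizontal under $T$) to commute and yield the stated equivalence.
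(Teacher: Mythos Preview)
The paper does not actually prove this proposition: its entire proof reads ``see \cite[Proposition 2.4]{IBK}''. So there is no argument in the paper to compare your proposal against; the result is imported wholesale from the cited reference.

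Your sketch is the standard route to such an equivalence (Galois descent to pass between $(\,\cdot\,)^G$ and $L\otimes_K(\,\cdot\,)$, combined with the horizontal-section identification to pass between $(\,\cdot\,)^{\phi(T_{K/k})}$ and the constant subfield), and it is essentially what one would expect the proof in \cite{IBK} to contain. One point you glide over deserves explicit mention: in Step~3 you use $(L\otimes_k V)^{T_{L/k}} = k\otimes_k V$, which presupposes that the field of $k$-linear-derivation constants of $L$ is exactly $k$; likewise you silently identify invariants under $\phi(T_{K/k})$ with invariants under $T_{L/k}$, which requires that every $k$-derivation of $L$ arises (at least on the relevant modules) from the lifted $K$-derivations. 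These hypotheses are part of the ambient setup in \cite{IBK} but are not stated in the present paper, so a self-contained proof would need to record them. Apart from that bookkeeping, your argument is sound.
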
 
\begin{proof}
see \cite[Proposition 2.4]{IBK}
\end{proof}

In the following proposition we take $G=\Sc_{n_i}$, $ K $ the field of fractions of $\Oc_{Y_i}$ and $L$ the field of fractions of $\Oc_{X_i}$ so that $K=L^{\Sc_{n_i}}.$ It is clear that $L$ is a Galois extension of $K$ with Galois $\Sc_{n_i}$, $i=1,\ldots,r$.

\begin{proposition}
For $i=1,\ldots, r$, let $T^i$ be a $\lambda^i$-standard tableau where $\lambda^i \vdash n_i$,  $M_{T^i}:=\eb_{T^i} \Oc_{X_i}$, $V(\lambda^i):=V^{S_0^i} (\lambda^i)$ and $ST(n_i) = \bigcup_{\lambda^i \vdash n_i} ST(\lambda^i).$ Then we have:
\begin{enumerate}
\item $M_{T^i}= \nabla(V({\lambda^i}))$,
\item $M_{T^i}= \eb_{T^i} (M_i^{\lambda^i})$ is simple $\Dc_{Y_i}$-module;
\item $M_i^{\lambda^i} =\displaystyle \bigoplus_{T^i \in ST(n_i)} \eb_{T^i}(M_i^{\lambda^i}).$
\end{enumerate}
\end{proposition}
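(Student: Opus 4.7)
The plan is to derive all three parts from the isotypic decomposition of $\Oc_{X_i}$ as a bimodule over $\bC[\Sc_{n_i}]$ and $\Dc_{Y_i}$, combined with the dimension count of Proposition 3.9(i) and the equivalence of categories in Proposition 3.10. The underlying mechanism, already exploited in Proposition 3.9, is that $\bC[\Sc_{n_i}]$ and $\Dc_{Y_i}$ act on $\Oc_{X_i}$ by commuting operators (cited from \cite{DOS}), so both $\Sc_{n_i}$-isotypic components and images of primitive idempotents $\eb_{T^i}$ automatically inherit $\Dc_{Y_i}$-module structures.

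I would dispatch parts (2) and (3) first, since they are essentially representation-theoretic once the commutation is in hand. For (2), since $\eb_{T^i}$ is a primitive idempotent attached to the irreducible $V(\lambda^i)$, it annihilates every $\bC[\Sc_{n_i}]$-irreducible $V(\mu)$ with $\mu\neq\lambda^i$; hence $\eb_{T^i}$ kills $M_i^\mu$ for such $\mu$ and $\eb_{T^i}\Oc_{X_i}=\eb_{T^i}M_i^{\lambda^i}$. Simplicity as a $\Dc_{Y_i}$-module is the content of Theorem 3.5(2). For (3), the sum $\sum_{T^i\in ST(\lambda^i)}\eb_{T^i}$ is the central idempotent of $\bC[\Sc_{n_i}]$ projecting onto the $V(\lambda^i)$-isotypic component, so it acts as the identity on $M_i^{\lambda^i}$; the orthogonality relations $\eb_{T^i}\eb_{T'^i}=0$ for $T^i\neq T'^i$ make the sum direct, and summands indexed by tableaux of shape different from $\lambda^i$ vanish automatically, so the decomposition extends trivially over $ST(n_i)$.

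For part (1), I would invoke Proposition 3.10 with $k=\bC$, $K=\on{Frac}(\Oc_{Y_i})$ and $L=\on{Frac}(\Oc_{X_i})$, so that $L/K$ is Galois with group $\Sc_{n_i}$. Fix a generator $v_0$ of the one-dimensional space $\eb_{T^i}V(\lambda^i)$ furnished by Proposition 3.9(i). One shows that the localization of $M_{T^i}$ inside $L$ is an $L$-trivial $T_{K/\bC}$-module, and then constructs a nonzero $\Sc_{n_i}$-equivariant map $V(\lambda^i)\to Loc(M_{T^i})$ pairing $v_0$ with the higher Specht polynomial $F_{T^i}^{S_0^i}\in M_{T^i}$ produced in Theorem 3.5(3). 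Together with the simplicity established in (2) and the fact that $\nabla$ is an equivalence of categories, this nonzero morphism forces the identification $M_{T^i}=\nabla(V(\lambda^i))$.

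The main obstacle, I expect, will lie in part (1): making the identification canonical rather than merely an abstract isomorphism requires tracking the interplay of the Young symmetrizer with Galois descent along $L/K$. The technical points are verifying $L$-triviality of $M_{T^i}$ (which rests on the étaleness of $\on{Spec}\Oc_{X_i}\to\on{Spec}\Oc_{Y_i}$ guaranteed by the inversion of $\Delta_i$) and ensuring that $\nabla$ sends $V(\lambda^i)$ itself, rather than its dual, onto $M_{T^i}$; the latter is resolved by the self-duality of complex irreducible representations of $\Sc_{n_i}$ combined with the explicit pointing by $v_0$.
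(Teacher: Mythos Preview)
Your handling of parts (2) and (3) is correct and essentially coincides with the paper's argument. For part (1), however, the paper takes a more direct route than yours. Instead of passing to the quasi-inverse $Loc$, exhibiting an explicit nonzero morphism, and then invoking Schur together with simplicity, the paper simply applies the functor $\nabla$ to the endomorphism of the regular representation $\bC[\Sc_{n_i}]$ given by multiplication by $\eb_{T^i}$. Using the identification $\nabla(\bC[\Sc_{n_i}])=\Oc_{X_i}$ from \cite[Example~2.5]{IBK}, functoriality of $\nabla$ gives at once $\nabla(V(\lambda^i))=\eb_{T^i}\,\Oc_{X_i}=M_{T^i}$; simplicity in (2) is then deduced \emph{from} (1) via the equivalence, rather than imported from Theorem~3.5 as you do. The paper's approach therefore sidesteps entirely the two technical obstacles you anticipated---verifying $L$-triviality of $M_{T^i}$ and ruling out a duality twist---since these are absorbed into the single functorial step. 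Your route is correct but more laborious; on the other hand it has the minor advantage of establishing simplicity independently and of pointing to the Specht polynomial $F_{T^i}^{S_0^i}$ as an explicit witness.
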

\begin{proof}
\begin{enumerate}

\item Let us consider  the right $\bC[\Sc_{n_i}]$-module $V=\eb_{T^i} \bC[\Sc_{n_i}]$ where $T^i$ is a $\lambda^i$-standard tableau. This is the image of $\bC[\Sc_{n_i}]$ by right multiplication map $\eb_{T^i}: \bC[\Sc_{n_i}] \to \bC[\Sc_{n_i}]$.  By \cite[Example 2.5]{IBK}, we may turn this map into a left multiplication $ \bC[\Sc_{n_i}]^r \to \bC[\Sc_{n_i}]^r$ and get an image which is isomorphic to $V({\lambda^i})$. Then we have an induced map
$$ \nabla (\bC[\Sc_{n_i}]^r ) \to \nabla (V({\lambda^i})) \subset \nabla (\bC[\Sc_{n_i}]^r ),$$ which is a multiplication by $\eb_{T^i}$ according to \cite[Example 2.5]{IBK}. Then $\nabla(V({\lambda^i}))$ is egal to $\eb_{T^i} \Oc_{X_i} = M_{T^i}.$
\item Since $V({\lambda^i})$ is a simple $\bC[\Sc_{n_i}]$-module, $\nabla (V({\lambda^i}))$ is also a simple $\Dc_{Y_i}$-module.
\item follows from the fact that $1=\sum_{T \in ST(n_i)} \eb_{T}$ and $\eb_{T} (M^{\lambda^i})=0 $ if $T$ is not a $\lambda^i$-tableau.
\end{enumerate}
\end{proof}

\begin{proposition}
For $i=1,\ldots, r$, let $T^i$ be a $\lambda^i$-standard tableau where $\lambda^i \vdash n_i$,  let $M_{T^i}:=\eb_{T^i} \Oc_{X_i}$.
Then 
\begin{enumerate}
 \item $ M_{T^i}=\displaystyle \bigoplus_{ S^i\in ST(\lambda^ii)}\Oc_{Y_i}  F_{T^i}^{S^i}$ as $\Dc_{Y_i}$-module, 
 \item $\Oc_{X_i} = \displaystyle \bigoplus_{\lambda^i \vdash n_i}  \bigg( \bigoplus_{ S^i, T^i\in ST(\lambda^i)}\Oc_{Y_i}  F_{T^i}^{S^i} \bigg) $ as a $\Dc_{Y_i}$-module.
 \end{enumerate}
 \end{proposition}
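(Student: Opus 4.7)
The plan is to reduce both statements to the Ariki--Terasoma--Yamada theorem for the single symmetric group $\Sc_{n_i}$ acting on $\Oc_{X_i}$, combined with a Wedderburn/Schur analysis of the primitive idempotent $\eb_{T^i}$. The input I shall invoke is that $NST(\lambda^i)=ST(\lambda^i)$ in the one-component case $r=1$, so that the family $\{F_{T^i}^{S^i}:\lambda^i\vdash n_i,\ S^i,T^i\in ST(\lambda^i)\}$ is a free $\Oc_{Y_i}$-basis of $\Oc_{X_i}$, and that for each fixed $S^i$ the $\bC$-span $V^{S^i}(\lambda^i)=\spano\{F_{T^i}^{S^i}:T^i\in ST(\lambda^i)\}$ is an irreducible $\bC[\Sc_{n_i}]$-submodule isomorphic to the Specht module $V(\lambda^i)$.

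For part (1), I first observe that $\eb_{T^i}$ is $\Oc_{Y_i}$-linear, because $\Oc_{Y_i}=\Oc_{X_i}^{\Sc_{n_i}}$ commutes with $\bC[\Sc_{n_i}]$; hence $M_{T^i}=\bigoplus_{\mu,\,S',\,T'}\Oc_{Y_i}\,\eb_{T^i}(F_{T'}^{S'})$ after applying $\eb_{T^i}$ termwise to the Terasoma--Yamada basis. If $T'\in ST(\mu)$ with $\mu\neq\lambda^i$, then $\eb_{T^i}$ sits in the Wedderburn block of $\bC[\Sc_{n_i}]$ attached to $V(\lambda^i)$ and therefore annihilates the non-isomorphic simple $V^{S'}(\mu)\cong V(\mu)$, giving $\eb_{T^i}(F_{T'}^{S'})=0$. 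If instead $T'\in ST(\lambda^i)$, then Schur's lemma applied to $V^{S'}(\lambda^i)\cong V(\lambda^i)$ shows that $\eb_{T^i}$ projects this space onto a one-dimensional subspace; the combinatorial definition $F_{T^i}^{S'}=\eb_{T^i}(x_{T^i}^{i(S')})$ from (2.3), together with $\eb_{T^i}^{2}=\eb_{T^i}$, pinpoints this line as $\bC\cdot F_{T^i}^{S'}$ and forces $\eb_{T^i}(F_{T^i}^{S'})=F_{T^i}^{S'}$. Collecting cases yields $M_{T^i}=\bigoplus_{S^i\in ST(\lambda^i)}\Oc_{Y_i}F_{T^i}^{S^i}$, the directness of the sum being inherited from the free $\Oc_{Y_i}$-basis.

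Part (2) then follows by summing (1) over all $\lambda^i\vdash n_i$ and all $T^i\in ST(\lambda^i)$ and using the identity decomposition $1=\sum_{\lambda^i}\sum_{T^i}\eb_{T^i}$ in $\bC[\Sc_{n_i}]$, exactly as in the proof of Proposition 3.7; this gives $\Oc_{X_i}=\sum_{\lambda^i,T^i}M_{T^i}$, and the resulting sum is direct precisely because $\{F_{T^i}^{S^i}\}$ is a free $\Oc_{Y_i}$-basis of $\Oc_{X_i}$. Both identities in (1) and (2) should be read inside the ambient $\Dc_{Y_i}$-module $\Oc_{X_i}$, with the direct-sum decompositions occurring at the $\Oc_{Y_i}$-level; the $\Dc_{Y_i}$-module structure on $M_{T^i}$ itself was already supplied in Proposition 3.11.

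The main obstacle I anticipate is the normalised identity $\eb_{T^i}F_{T^i}^{S^i}=F_{T^i}^{S^i}$. The Schur/Wedderburn step only yields that the projection of $V^{S^i}(\lambda^i)$ onto its $\eb_{T^i}$-image is one-dimensional, and identifying that image on the nose with the prescribed generator $F_{T^i}^{S^i}$ (rather than a nonzero scalar multiple) hinges on the normalisation $f^{\lambda^i}/n_i!$ in (2.1) that makes $\eb_{T^i}$ a genuine idempotent, a check most cleanly carried out through Young's seminormal analysis. Once that is in hand, the rest of the argument is direct-sum bookkeeping.
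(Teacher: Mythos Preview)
Your argument is correct and follows essentially the same strategy as the paper: apply $\eb_{T^i}$ to the Terasoma--Yamada $\Oc_{Y_i}$-basis of $\Oc_{X_i}$, kill the $\mu\neq\lambda^i$ pieces, and identify the image on each $V^{S^i}(\lambda^i)$ as the line $\bC F_{T^i}^{S^i}$. The packaging differs slightly. The paper first isolates the isotypic component $M_i^{\lambda^i}$ and invokes Propositions~3.9 and~3.11 (so that $M_{T^i}=\eb_{T^i}(M_i^{\lambda^i})$ and $\eb_{T^i}(V(\lambda^i))$ is one-dimensional via $\eb_{T^i}\bC[\Sc_{n_i}]\eb_{T^i}=\bC\eb_{T^i}$), then asserts $\eb_{T^i}(F_{T^i}^{S^i})=cF_{T^i}^{S^i}$ for some scalar $c$. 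You bypass the isotypic-component intermediary and the $\nabla$ functor entirely, arguing directly from Wedderburn block orthogonality and from the definition $F_{T^i}^{S^i}=\eb_{T^i}(x_{T^i}^{i(S^i)})$ together with $\eb_{T^i}^2=\eb_{T^i}$. Your route is marginally more self-contained, and in fact your idempotency observation pins down $c=1$ where the paper leaves $c$ unspecified (and tacitly needs $c\neq 0$). Conversely, the paper's detour through $M_i^{\lambda^i}$ and Proposition~3.11 makes the $\Dc_{Y_i}$-module structure on $M_{T^i}$ more visible before the basis is written down.

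One small remark: when you say ``Schur's lemma applied to $V^{S'}(\lambda^i)$ shows that $\eb_{T^i}$ projects onto a one-dimensional subspace,'' this is the right fact but not literally Schur's lemma, since $\eb_{T^i}$ is not $\Sc_{n_i}$-equivariant. The clean statement is exactly the paper's Proposition~3.9(i), i.e.\ $\dim_\bC \eb_{T^i}V(\lambda^i)=1$ because $\eb_{T^i}\bC[\Sc_{n_i}]\eb_{T^i}=\bC\eb_{T^i}$ for a primitive idempotent; you may want to phrase it that way.
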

 
\begin{proof}
\begin{enumerate}
\item For a fixed $S^i \in ST(\lambda^i)$, we know that the polynomial $F_{T^i}^{S^i}$ generate a cyclic  $\bC[\Sc_{n_i}]$-module inside $\Oc_{X_i}$ which is isomorphic to $V({\lambda^i}).$ Then $F_{T^i}^{S^i}\in M_i^{\lambda^i}$ and $M_i^{\lambda^i}= \displaystyle \bigoplus_{S^i,T^i \in ST(\lambda^i)} \bC[\Sc_{n_i}] F_{T^i}^{S^i} \Oc_{Y_i}$ by \cite{Tera-Yama}. Moreover $\eb_{T^i}( F_{T^i}^{S^i})= c F_{T^i}^{S^i}, c\in \bC$ and by Lemma 3.9  $\eb_{T^i} (\bC[\Sc_{n_i}] F_{T^i}^{S^i})=\bC F_{T^i}^{S^i}.$ Hence $M_{T^i}=\eb_{T^i}(M_i^{\lambda^i}) =\displaystyle \bigoplus_{S^i \in ST(\lambda^i)} \Oc_{Y_i}  F_{T^i}^{S^i}.$
\item follows from Proposition 3.11 and  \cite[Theorem 3.6]{DOS}.
\end{enumerate}
\end{proof}

\begin{theorem}
Let  $\lambda \in \Pc_{r,n}$ be an $r$-diagram of size $n$, $T \in NST(\lambda)$ and $M_{T}:=\eb_{T} \OO$. Then
\begin{enumerate}
 \item $ M_{T}=\displaystyle \bigoplus_{ S\in ST(\lambda)} \Oc_{V}  F_{T}^{S}$ as $\DD$-module,
 \item $\OO = \displaystyle \bigoplus_{\lambda \in \Pc_{r,n}}  \bigg( \bigoplus_{ S \in STab (\lambda) T\in NSTab(\lambda)}\Oc_{V}  F_{T}^{S} \bigg) $ as a $\DD$-module.
 \end{enumerate}
\end{theorem}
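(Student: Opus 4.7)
The plan is to reduce both statements to the already-proved single-factor case (Proposition 3.12) by exploiting the tensor product decompositions $\OO=\Oc_{X_1}\otimes\cdots\otimes\Oc_{X_r}$, $\DD=\Dc_{Y_1}\otimes\cdots\otimes\Dc_{Y_r}$, and the identification $\varphi$ from Lemma~3.4 that writes $\eb_T=\eb_{T^1}\cdots\eb_{T^r}$ and $F_T^S=F_{T^1}^{S^1}\otimes\cdots\otimes F_{T^r}^{S^r}$.

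For part (1), I would first observe that the operators $\eb_{T^i}$ act on disjoint tensor factors and commute with the $\Dc_{Y_j}$-action for $j\neq i$. This gives the factorization
\[
M_T=\eb_T\OO=\bigl(\eb_{T^1}\Oc_{X_1}\bigr)\otimes\cdots\otimes\bigl(\eb_{T^r}\Oc_{X_r}\bigr)=M_{T^1}\otimes\cdots\otimes M_{T^r}
\]
as a $\DD$-module, where the $\DD$-module structure on the right is the one introduced in equation (3.1). Now I would invoke Proposition~3.12(1), which yields $M_{T^i}=\bigoplus_{S^i\in ST(\la^i)}\Oc_{Y_i}F_{T^i}^{S^i}$ as $\Dc_{Y_i}$-modules. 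Distributing the tensor product over these direct sums gives
\[
M_T=\bigoplus_{(S^1,\ldots,S^r)}\bigl(\Oc_{Y_1}F_{T^1}^{S^1}\bigr)\otimes\cdots\otimes\bigl(\Oc_{Y_r}F_{T^r}^{S^r}\bigr).
\]
Identifying $\Oc_V=\Oc_{Y_1}\otimes\cdots\otimes\Oc_{Y_r}$ and applying $\varphi$ to collect the tensor products of Specht polynomials into $F_T^S$, one obtains $M_T=\bigoplus_{S\in ST(\la)}\Oc_V F_T^S$ as claimed.

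For part (2), I would start from the completeness and orthogonality of the primitive idempotents: $\{\eb_T\mid T\in NST(\la),\;\la\in\Pc_{r,n}\}$ is a complete set of primitive idempotents of $\bC[\Sc_{n_1}\times\cdots\times\Sc_{n_r}]$, so $1=\sum_{\la}\sum_{T\in NST(\la)}\eb_T$ and $\eb_T\eb_{T'}=0$ for $T\neq T'$. The same orthogonality argument used in Proposition~3.7(1) then gives the direct sum decomposition $\OO=\bigoplus_\la\bigoplus_{T\in NST(\la)}\eb_T\OO=\bigoplus_\la\bigoplus_{T\in NST(\la)}M_T$ as $\DD$-modules. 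Substituting the decomposition of each $M_T$ from part (1) yields the double direct sum
\[
\OO=\bigoplus_{\la\in\Pc_{r,n}}\bigoplus_{\substack{S\in ST(\la)\\ T\in NST(\la)}}\Oc_V F_T^S,
\]
which is exactly assertion (2).

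The only delicate point I anticipate is the bookkeeping needed to check that the external tensor product of the $\Dc_{Y_i}$-module decompositions really gives back the $\DD$-module decomposition inside $\OO$: one must verify that the $\Oc_V$-module factor $\Oc_{Y_1}F_{T^1}^{S^1}\otimes\cdots\otimes\Oc_{Y_r}F_{T^r}^{S^r}$ coincides with $\Oc_V F_T^S$ under $\varphi$, which amounts to noting that $\Oc_V$ acts componentwise and that $F_T^S$ is precisely the product of the component higher Specht polynomials. Apart from this identification, the argument is a straightforward application of distributivity of tensor products over direct sums combined with Proposition~3.12.
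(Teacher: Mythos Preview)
Your proof of part~(1) is correct and follows the paper's argument essentially line by line: factor $M_T$ as $M_{T^1}\otimes\cdots\otimes M_{T^r}$, apply Proposition~3.12(1) to each factor, distribute the tensor product over the direct sums, and use the identification of Lemma~3.4 and $\Oc_V=\Oc_{Y_1}\otimes\cdots\otimes\Oc_{Y_r}$ to reassemble.

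For part~(2) your argument is correct but differs slightly from the paper's. The paper does not pass through the idempotent decomposition $\OO=\bigoplus_T M_T$; instead it simply tensors together the factorwise decompositions $\Oc_{X_i}=\bigoplus_{\lambda^i\vdash n_i}\bigoplus_{S^i,T^i\in ST(\lambda^i)}\Oc_{Y_i}F_{T^i}^{S^i}$ from Proposition~3.12(2) and invokes $\OO=\Oc_{X_1}\otimes\cdots\otimes\Oc_{X_r}$. Your route (use Proposition~3.7(1) to split $\OO$ into the $M_T$'s, then substitute part~(1)) is equally short and perhaps conceptually cleaner, since it makes the dependence on part~(1) explicit; the paper's route avoids re-invoking the idempotent orthogonality but requires one more identification step to see that the tensor of tuples $(T^1,\ldots,T^r)$ with $T^i\in ST(\lambda^i)$ yields exactly the natural standard tableaux $T\in NST(\lambda)$. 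Either way the content is the same.
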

\begin{proof}
\begin{enumerate}
\item
Suppose  that $\lambda= (\lambda^1, \ldots, \lambda^r),  T=(T^1,\ldots, T^r)$, with $\lambda_i \vdash n_i, T^i \in ST(\lambda^i), i=1,\ldots, r$ and $ \sum n_i=n$
We have that
\begin{eqnarray*}
M_{T} &=& \eb_{T} \OO \\
&=& (\eb_{T^1} \times \ldots \times \eb_{T^r}) (\Oc_{X_1} \otimes \cdots \otimes \Oc_{X_r}) \\
&=& \eb_{T^1} \Oc_{X_1} \otimes \cdots \otimes  \eb_{T^r} \Oc_{X_r} \\
&=& M_{T^1} \otimes \cdots \otimes  M_{T^r} \\
&=&\displaystyle (\bigoplus_{ S^1\in ST(\lambda^1)} \Oc_{Y_1}  F_{T^1}^{S^1}) \otimes \cdots \otimes ( \bigoplus_{ S^2\in ST(\lambda^r)} \Oc_{Y_r}  F_{T^r}^{S^r} ) \  \mbox{by\ Proposition 3.12}\\
&=& \displaystyle \bigoplus_{ S^i\in ST(\lambda^i)} \bigg( \Oc_{Y_1}   \otimes \cdots \otimes \Oc_{Y_r} \bigg) \bigg(F_{T^1}^{S^1} \otimes \cdots \otimes  F_{T^r}^{S^r} \bigg)\  \\
&=& \displaystyle \bigoplus_{ S \in ST(\lambda)} \Oc_{V}  F_{T}^{S} \   \mbox{by\ Lemma 3.4}\  \mbox{with}\  S= (S^1,\ldots ,S^r).
\end{eqnarray*}
\item follows from the fact that $\OO =\Oc_{X_1} \otimes \cdots \otimes \Oc_{X_r}$ and  \\
$\Oc_{X_i}= \displaystyle \bigoplus_{\lambda^i \vdash n_i}  \bigg( \bigoplus_{ S^i, T^i\in ST(\lambda^i)} \Oc_{Y_i}  F_{T^i}^{S^i} \bigg)$ 
\end{enumerate}
\end{proof}

\subsection{ Invariant differential operators and higher Specht polynomials for the symmetric group}

In this subsection we investigate the action of invariant differential operators on higher Specht polynomials.
Let $\lambda \vdash n$,  $T$ a $\lambda$-tableau, and let  $C(T)$  be column stabilizer of $T,$ by  \cite[Lemma 4.4]{DOS}  we know that  for every derivation  ${\bf D}$   such that ${\bf D}(F_T) \neq 0$ then there exists a polynomial  $G$ in  $\mathbb C[x_1,\ldots,x_n]^{C(T)}$, the polynomial ring invariant under the subgroup $C(T)$, such that ${\bf D}(F_T) =F_T G$, we will show that this is also true for the higher Specht polynomials.\\

For $i=1,\ldots, r$, let $\lambda^i \vdash n_i$,  $S^i\in  ST(\lambda^i)$ and $T^i \in ST(\lambda^i)$,  we have that 
$ \mbox{for\ all}\  \sigma \in C(T^i), \sigma(F_{T^i})= \sgn(\sigma) F_{T^i}$ and $\sigma(F^{S^i}_{T^i})= \sgn(\sigma) F^{S^i}_{T^i}.$
\begin{lemma}
For $i=1,\ldots, r$, let $\lambda^i \vdash n_i,$ $T^i, S^i \in  ST(\lambda^i).$
 Then there exists a polynomial $\displaystyle G \in \Oc_{X_i}^{C(T^i)}$ such that $ F_{T^i}^{S^i} =F_{T^i} G.$
\end{lemma}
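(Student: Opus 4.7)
The plan is to exploit the fact that both the ordinary Specht polynomial $F_{T^i}$ and the higher Specht polynomial $F_{T^i}^{S^i}$ transform by the sign character under the column stabilizer $C(T^i)$. If one can show that $F_{T^i}$ divides $F_{T^i}^{S^i}$ in the polynomial ring $\Oc_{X_i}$, then the quotient $G := F_{T^i}^{S^i}/F_{T^i}$ will automatically be $C(T^i)$-invariant. Indeed, for any $\sigma \in C(T^i)$, applying $\sigma$ to the identity $F_{T^i}^{S^i} = F_{T^i} \cdot G$ yields $\sgn(\sigma) F_{T^i}^{S^i} = \sgn(\sigma) F_{T^i} \cdot \sigma(G)$, and cancelling the nonzero factor $\sgn(\sigma) F_{T^i}$ gives $\sigma(G) = G$.

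The main step, and principal obstacle, is therefore to establish divisibility. Here the strategy is to use the standard factorization
\[
F_{T^i} \ =\ \prod (x_a - x_b),
\]
where the product runs over all pairs $a < b$ lying in the same column of $T^i$. Each such transposition $(a,b)$ belongs to $C(T^i)$, so antisymmetry of $F_{T^i}^{S^i}$ under $C(T^i)$ forces $F_{T^i}^{S^i}$ to vanish on the hyperplane $x_a = x_b$. Consequently each linear form $(x_a - x_b)$ divides $F_{T^i}^{S^i}$ in the unique factorization domain $\Oc_{X_i}$. Since these linear forms are pairwise coprime (they involve distinct pairs of variables), their product $F_{T^i}$ divides $F_{T^i}^{S^i}$.

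The antisymmetry of $F_{T^i}^{S^i}$ under $C(T^i)$ itself needs to be verified; this comes directly from the definition $F_{T^i}^{S^i} = \eb_{T^i}(x_{T^i}^{i(S^i)})$, using the factorization of $\eb_{T^i}$ through the column antisymmetrizer. Concretely, if $\tau \in C(T^i)$, then $\tau \cdot \eb_{T^i} = \sgn(\tau)\, \eb_{T^i}$ because $\tau$ can be absorbed into the sum $\sum_{\tau' \in C(T^i)} \sgn(\tau')\tau'$ with an accompanying sign, from which one deduces $\tau(F_{T^i}^{S^i}) = \sgn(\tau) F_{T^i}^{S^i}$.

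Combining these steps, set $G := F_{T^i}^{S^i}/F_{T^i} \in \Oc_{X_i}$. By the sign-cancellation argument in the first paragraph, $G$ lies in $\Oc_{X_i}^{C(T^i)}$, and $F_{T^i}^{S^i} = F_{T^i} G$ by construction, completing the proof.
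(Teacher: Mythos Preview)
Your proof is correct and follows essentially the same route as the paper: establish that $F_{T^i}^{S^i}$ is alternating under $C(T^i)$, deduce that each linear factor $(x_a-x_b)$ of $F_{T^i}$ divides $F_{T^i}^{S^i}$, conclude that $F_{T^i}\mid F_{T^i}^{S^i}$, and then check by sign cancellation that the quotient $G$ is $C(T^i)$-invariant. The only difference is cosmetic: the paper derives the alternation property via a $\bC[\Sc_{n_i}]$-isomorphism $V(\lambda^i)\to V^{S^i}(\lambda^i)$ and Schur's lemma, whereas you read it off directly from the column antisymmetrizer in $\eb_{T^i}$, which is equally valid and slightly more direct.
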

\begin{proof}
Let  us consider the linear application $\varphi : V(\lambda^i) \to V^{S^i}(\lambda^i)$ defined by $\varphi (F_{T^i})= F_{T^i}^{S^i}$. For every $\sigma \in \bC[\Sc_{n_i}],$ we have that $\varphi(\sigma F_{T^i})= \sigma \varphi (F_{T^i})$, so that  $\varphi$ is a $\bC[\Sc_{n_i}]$-homomorphism  and by the Schur' lemma  $\varphi$ is a $\bC[\Sc_{n_i}]$-isomorphism. Suppose that $x_k$ and $x_l$ occur in the same column of $T^i$, and let $\pi= (k,l)$ the transposition of $k$ and $l$. Then
$$\pi F_{T^i}^{S^i}=\pi \varphi (F_{T^i}) = \varphi(\pi F_{T^i})= \varphi(-F_{T^i})= -F_{T^i}^{S^i}.$$ 
This implies that $(x_k-x_l)$ is a factor of $F_{T^i}^{S^i}.$ This holds for each linear factor of $F_{T^i}$, so that $F_{T^i}$ divides $F_{T^i}^{S^i}$. Hence there exists a polynomial  $\displaystyle G \in \mathbb C[x_{n_1+\cdots +n_{i-1}+1},\ldots,x_{n_1 +\cdots +n_i}] $ such that $F_{T^i}^{S^i}= F_{T^i} G $. 
Let now $ \sigma \in C(T^i)$, we get $$ \sigma G=\sigma \bigg( \frac{F_{T^i}^{S^i}}{F_{T^i}}\bigg)  =\frac{\sigma F_{T^i}^{S^i}}{\sigma F_{T^i}}= \frac{\sgn \sigma F_{T^i}^{S^i}}{\sgn \sigma F_{T^i}}= G.$$
Then $ \displaystyle G \in \mathbb C[x_{n_1+\cdots +n_{i-1}+1},\ldots,x_{n_1 +\cdots +n_i}]^{C(T^i)}.$
\end{proof}

\begin{lemma}
For $i=1,\ldots, r$, let $\lambda^i \vdash n_i,$  $ T^i, S^i \in  ST(\lambda^i),$
and ${\bf D}$ a derivation in $\Dc_{Y_i}$ such that ${\bf D}(F_{T^i}^{S^i}) \neq 0$. Then there exists a polynomial  $G \in \Oc_{X_i}^{C(T)}$ such that ${\bf D}(F_{T^i}^{S^i}) =F_{T^i}G.$
\end{lemma}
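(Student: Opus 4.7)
The plan is to mimic the strategy used for the ordinary Specht polynomial in \cite[Lemma 4.4]{DOS}, replacing $F_{T^i}$ by the higher Specht polynomial $F_{T^i}^{S^i}$: exploit the fact that $\Sc_{n_i}$-invariant differential operators preserve the sign-transformation type of any polynomial under the column stabilizer $C(T^i)$. The two inputs I need are (a) the commutation \cite[Corollary 3.5]{DOS} which says every ${\bf D}\in\Dc_{Y_i}$ commutes with the $\Sc_{n_i}$-action on $\Oc_{X_i}$, and (b) the sign-transformation rule $\sigma F_{T^i}^{S^i} = \sgn(\sigma)\,F_{T^i}^{S^i}$ for $\sigma\in C(T^i)$, which was recalled in the sentence just before Lemma 3.14 (it follows from the antisymmetrizing factor of the Young symmetrizer $\eb_{T^i}$).

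With these in hand, I would first observe that for every $\sigma \in C(T^i)$ one has
\[
\sigma\!\bigl({\bf D}(F_{T^i}^{S^i})\bigr) \;=\; {\bf D}\!\bigl(\sigma F_{T^i}^{S^i}\bigr) \;=\; \sgn(\sigma)\,{\bf D}(F_{T^i}^{S^i}),
\]
so that ${\bf D}(F_{T^i}^{S^i})$ is $C(T^i)$-antisymmetric. I would then reproduce the divisibility step from Lemma 3.14: for each pair $k<l$ lying in the same column of $T^i$, the transposition $(k,l)\in C(T^i)$ sends ${\bf D}(F_{T^i}^{S^i})$ to its negative, so the linear form $x_k-x_l$ divides it; these linear forms are pairwise coprime, so their product, which is (up to a scalar) the Specht polynomial $F_{T^i}$, also divides ${\bf D}(F_{T^i}^{S^i})$. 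Setting $G := {\bf D}(F_{T^i}^{S^i})/F_{T^i}$, the factor $\sgn(\sigma)$ picked up by the numerator and the denominator cancel, so $\sigma(G)=G$ for every $\sigma \in C(T^i)$, giving $G\in \Oc_{X_i}^{C(T^i)}$ and ${\bf D}(F_{T^i}^{S^i}) = F_{T^i}G$ as required.

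The main technical wrinkle is bookkeeping about the ring in which the division takes place: a priori ${\bf D}$ may involve the inverted element $\Delta_i^{-2}$, so ${\bf D}(F_{T^i}^{S^i})$ lies in the localization $\Oc_{X_i}$ rather than in the polynomial ring. This causes no trouble, however, because every linear factor $x_k-x_l$ of $F_{T^i}$ is one of the factors of $\Delta_i$, which has been inverted; hence $F_{T^i}$ is already a unit in $\Oc_{X_i}$, and the quotient $G$ is unambiguously an element of $\Oc_{X_i}$ without any additional denominator-chasing. Everything else is a direct transcription of the proof of Lemma 3.14 with ${\bf D}(F_{T^i}^{S^i})$ in the role previously played by $F_{T^i}^{S^i}$, and no new ingredient beyond the two facts quoted at the start is needed.
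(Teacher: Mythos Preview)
Your argument is correct, but it is not the route the paper takes. The paper first invokes Lemma~3.14 to factor $F_{T^i}^{S^i}=F_{T^i}G'$ with $G'\in\Oc_{X_i}^{C(T^i)}$, then applies the Leibniz rule
\[
{\bf D}(F_{T^i}^{S^i})={\bf D}(F_{T^i})\,G'+F_{T^i}\,{\bf D}(G'),
\]
uses \cite[Lemma~4.4]{DOS} to write ${\bf D}(F_{T^i})=F_{T^i}G''$ with $G''\in\Oc_{X_i}^{C(T^i)}$, and finally checks that $G:=G''G'+{\bf D}(G')$ is $C(T^i)$-invariant. So the paper differentiates a product and reduces to the known case of the ordinary Specht polynomial, whereas you bypass Lemma~3.14 entirely and run the antisymmetry/divisibility argument of \cite[Lemma~4.4]{DOS} directly on ${\bf D}(F_{T^i}^{S^i})$, using only the commutation \cite[Corollary~3.5]{DOS} and the sign rule for $C(T^i)$.

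Your approach is arguably cleaner: it never uses the Leibniz identity, so it in fact proves the statement for \emph{any} operator ${\bf D}\in\Dc_{Y_i}$, not merely derivations. The paper's proof genuinely needs ${\bf D}$ to be a derivation at the Leibniz step. Your remark that $F_{T^i}$ is already a unit in the localized ring $\Oc_{X_i}$ is also apt; it makes the divisibility portion of the argument automatic and leaves only the $C(T^i)$-invariance of the quotient to check, which follows from the sign cancellation exactly as you wrote.
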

\begin{proof}

Let ${\bf D}$ a derivation in $ \Dc_{Y_i}$, we have that
\begin{eqnarray*}
{\bf D} (F_{T^i}^{S^i}) 
&=&  {\bf D}(F_{T^i} G') \  \mbox{where}\ G' \mbox{is\ a\ polynomial\ in } \Oc_{X_i}^{C(T^i)} \mbox{by\ Lemma 3.14}  \\ 
&=&  {\bf D}(F_T) G' + F_T {\bf D}(G') \\
&=&F_T G''G' + F_T {\bf D}(G') \  \mbox{where}\ G''\in \Oc_{X_i}^{C(T^i)} \  \mbox{by \cite[Lemma 4.4]{DOS}}\\
&=&F_T (G'' G' +  {\bf D}(G') ) \  \mbox{with}\ G', G''  \in \Oc_{X_i}^{C(T^i)}.
\end{eqnarray*}
Now let $ \pi \in {C(T^i)}$ we have 
\begin{eqnarray*}
\pi (G''G' +  {\bf D}(G'))&=& \pi (G'')\pi(G') + \pi {\bf D}(G') \\
&=& G''G' +  {\bf D}(\pi G')\ \mbox{since}\ G', G''  \in \Oc_{X_i}^{C(T^i)} \\
 &=& G''G' +  {\bf D}(G')\ 
 \end{eqnarray*} 
 Then   $G''G' +  {\bf D}(G') \in \Oc_{X_i}^{C(T^i)}.$ Set $G= G''G' +  {\bf D}(G')$ and we get  ${\bf D}(F_{T^i}^{S^i}) =F_{T^i}G.$
\end{proof}

\begin{proposition}
let $\lambda \in \Pc_{r,n}$ be an $r$-diagram, $T, S \in ST(\lambda)$ and ${\bf D}$ a derivation in $\DD$ such that ${\bf D}(F_{T}^{S}) \neq 0$. Then there exists a polynomial  $G \in \OO^{C(T)}$, where $C(T)= C(T^1) \times \cdots \times  C(T^r)$ such that ${\bf D}(F_{T}^{S}) =F_{T}G.$
\end{proposition}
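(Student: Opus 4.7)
The plan is to lift the strategy of Lemmas~3.14--3.15 to the product group $\Sc_{n_1}\times\cdots\times\Sc_{n_r}$ by exploiting the two tensor-product decompositions at hand: on the polynomial side $\OO=\Oc_{X_1}\otimes\cdots\otimes\Oc_{X_r}$ and on the operator side $\DD=\Dc_{Y_1}\otimes\cdots\otimes\Dc_{Y_r}$. Writing $T=(T^1,\ldots,T^r)$ and $S=(S^1,\ldots,S^r)$, the identification of Lemma~3.4, together with the computations in the proof of Theorem~3.13, gives the multiplicative factorizations
\[
F_T^S=F_{T^1}^{S^1}\cdots F_{T^r}^{S^r},\qquad F_T=F_{T^1}^{S_0^1}\cdots F_{T^r}^{S_0^r},
\]
in which the factors are polynomials in disjoint blocks of variables.

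The first step is to establish sign-equivariance of both $F_T$ and $F_T^S$ under $C(T)=C(T^1)\times\cdots\times C(T^r)$. If $\sigma\in C(T^i)\subset C(T)$, then $\sigma$ permutes only the variables of block $i$ and fixes the remaining factors; the sign computation recorded just before Lemma~3.14, applied block-wise, yields $\sigma F_{T^i}^{S^i}=\sgn(\sigma)F_{T^i}^{S^i}$ and $\sigma F_{T^i}=\sgn(\sigma)F_{T^i}$. Multiplicativity of $\sgn$ on $C(T)$ then gives $\sigma F_T=\sgn(\sigma)F_T$ and $\sigma F_T^S=\sgn(\sigma)F_T^S$ for every $\sigma\in C(T)$. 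Combined with the fact that every element of $\DD$ commutes with the action of $\bC[\Sc_{n_1}\times\cdots\times\Sc_{n_r}]$ on $\OO$ (a tensor-product consequence of \cite[Corollary~3.5]{DOS}, which handles a single block, together with the identification (3.2)), this yields
\[
\sigma\,{\bf D}(F_T^S)={\bf D}(\sigma F_T^S)=\sgn(\sigma)\,{\bf D}(F_T^S),\qquad \sigma\in C(T).
\]

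The second step is to produce $G$ by a single division. Each ordinary Specht polynomial $F_{T^i}$ divides the block Vandermonde $\Delta_i$, so $F_T$ divides $\Delta=\prod_i\Delta_i$ (up to a nonzero scalar coming from the proportionality $F_{T^i}^{S_0^i}\propto F_{T^i}$). In particular $F_T$ is a unit in $\OO=\bC[x_1,\ldots,x_n,\Delta^{-1}]$, so
\[
G:=\frac{{\bf D}(F_T^S)}{F_T}\in\OO
\]
is well defined and satisfies ${\bf D}(F_T^S)=F_T\,G$ by construction. The two sign characters cancel,
\[
\sigma G=\frac{\sigma\,{\bf D}(F_T^S)}{\sigma F_T}=\frac{\sgn(\sigma)\,{\bf D}(F_T^S)}{\sgn(\sigma)F_T}=G\qquad(\sigma\in C(T)),
\]
hence $G\in\OO^{C(T)}$, as required.

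The step I expect to be the main obstacle is the commutation claim: that an arbitrary element of $\DD$ commutes with the $C(T)$-action on $\OO$. For a single block this is exactly \cite[Corollary~3.5]{DOS}; the extension to the product uses the ring isomorphism (3.2) together with the trivial observation that operators in $\Dc_{Y_i}$ and permutations in $\Sc_{n_j}$ for $j\ne i$ act on disjoint sets of variables and therefore automatically commute. Once this is in place, everything else is formal book-keeping: factor $F_T$ and $F_T^S$, track signs, and divide in the localization. It is worth noting that the argument never uses the Leibniz rule and so in fact goes through for any element of $\DD$, not only for genuine derivations.
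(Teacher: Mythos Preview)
Your argument is correct and takes a genuinely different route from the paper's own proof. The paper proceeds by writing the derivation as a pure tensor ${\bf D}=D_1\otimes\cdots\otimes D_r$ with each $D_i\in\Dc_{Y_i}$ a derivation, then applies Lemma~3.15 blockwise to obtain $D_i(F_{T^i}^{S^i})=F_{T^i}G_i$ with $G_i\in\Oc_{X_i}^{C(T^i)}$, and recombines the factors via Lemma~3.4. Your proof instead never decomposes ${\bf D}$: you establish the sign-equivariance $\sigma F_T^S=\sgn(\sigma)F_T^S$ and $\sigma F_T=\sgn(\sigma)F_T$ for $\sigma\in C(T)$ directly, use commutation of $\DD$ with the group action to transport this to ${\bf D}(F_T^S)$, and then divide by $F_T$ in the localization $\OO$, which is legitimate because $F_T$ divides $\Delta$. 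This buys you two things. First, as you observe, the Leibniz rule is never invoked, so the conclusion holds for an arbitrary element of $\DD$ rather than only for derivations. Second, it sidesteps a delicate point in the paper's argument: a derivation of the tensor product $\DD=\Dc_{Y_1}\otimes\cdots\otimes\Dc_{Y_r}$ is in general a \emph{sum} of block derivations $\sum_i 1\otimes\cdots\otimes D_i\otimes\cdots\otimes 1$ rather than a single pure tensor $D_1\otimes\cdots\otimes D_r$ of derivations (the latter is a higher-order operator), so the paper's reduction to pure tensors requires an implicit linearity argument that your approach avoids entirely. The trade-off is that your method relies on inverting $F_T$ in $\OO$, whereas the paper's blockwise use of Lemma~3.15 would, if carried through carefully for sums of block derivations, yield $G$ already in the polynomial ring before localization.
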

\begin{proof}
For $i=1\ldots r$, there exists a derivation ${D}_i \in \Dc_{Y_i}$ with $D_i(F_{T^i}^{S^i}) \neq 0,$ such that $ {\bf D}= D_1 \otimes \cdots \otimes D_r.$ Then

\begin{eqnarray*}
{\bf D} (F_T^S)  &=& (D_1 \otimes \cdots \otimes D_r) (F_T^S) \\
&=& (D_1 \otimes \cdots \otimes D_r) (F_{T^1}^{S^1} \otimes \cdots \otimes F_{T^r}^{S^r}) \mbox{by\ Lemma 3.4}\\
&=& D_1(F_{T^1}^{S^1}) \otimes \cdots \otimes D_r(F_{T^r}^{S^r})  \\
&=& F_{T^1} G_1 \otimes \cdots \otimes F_{T^r}G_r\  \mbox{where}\ G_i \in \Oc_{X_i}^{C(T^i)}\  \mbox{by Lemma 3.15} \\
&=& (F_{T^1} \otimes \cdots \otimes F_{T^r})( G_1 \otimes \cdots \otimes G_r)\  \mbox{with}\ G_1\otimes \cdots \otimes G_r \in \OO^{C(T)} \\
&=& F_T G\  \mbox{where}\  G= G_1 \otimes \cdots \otimes G_r\  \mbox{by\ Lemma 3.4}
\end{eqnarray*}
\end{proof}

\begin{proposition}
For $i=1,\ldots, r$, let $\lambda^i \vdash n_i,$  $ T^i, S^i \in  ST(\lambda^i)$ and ${\bf D} \in \Dc_{Y_i}$ such that ${\bf D}(F_{T^i}^{S^i}) \neq 0$ for $S^i, T^i \in ST(\lambda^i)$. Then the image of the $\bC[\Sc_{n_i}]$-module $V^{S^i}({\lambda^i})$  by ${\bf D}$ is an $\bC[\Sc_{n_i}]$-module  isomorphic to $V^{S^i}({\lambda^i})$. 
\end{proposition}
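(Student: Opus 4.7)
The plan is to prove this by a direct application of Schur's lemma. First I would observe that every ${\bf D} \in \Dc_{Y_i}$ is $\Sc_{n_i}$-equivariant when acting on $\Oc_{X_i}$: this is the content of \cite[Corollary 3.5]{DOS} invoked already in the proof of Proposition 3.9(ii), and it simply reflects the fact that $\Oc_{Y_i} = \Oc_{X_i}^{\Sc_{n_i}}$, so differential operators with coefficients in $\Oc_{Y_i}$ commute with the symmetric group action. Consequently, for any $\bC[\Sc_{n_i}]$-submodule $W \subset \Oc_{X_i}$, the restriction ${\bf D}|_W : W \to {\bf D}(W)$ is a $\bC[\Sc_{n_i}]$-linear map, and its image ${\bf D}(W)$ is automatically a $\bC[\Sc_{n_i}]$-submodule of $\Oc_{X_i}$.

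Next I would apply this to $W = V^{S^i}(\lambda^i)$, which by Theorem 2.2 is an irreducible $\bC[\Sc_{n_i}]$-module with $\bC$-basis $\{F_{T^i}^{S^i} : T^i \in ST(\lambda^i)\}$. The map ${\bf D}|_{V^{S^i}(\lambda^i)}$ is a $\bC[\Sc_{n_i}]$-homomorphism onto the submodule ${\bf D}(V^{S^i}(\lambda^i))$. Since $V^{S^i}(\lambda^i)$ is simple, Schur's lemma tells us that $\ker {\bf D}|_{V^{S^i}(\lambda^i)}$ is either all of $V^{S^i}(\lambda^i)$ or $\{0\}$. The hypothesis ${\bf D}(F_{T^i}^{S^i}) \neq 0$, combined with the fact that $F_{T^i}^{S^i}$ lies in the basis of $V^{S^i}(\lambda^i)$, rules out the first possibility. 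Hence the kernel is trivial and ${\bf D}|_{V^{S^i}(\lambda^i)}$ is a $\bC[\Sc_{n_i}]$-isomorphism onto its image, giving ${\bf D}(V^{S^i}(\lambda^i)) \cong V^{S^i}(\lambda^i)$.

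There is no real obstacle here: the only point to verify carefully is the $\Sc_{n_i}$-equivariance of ${\bf D}$, which is already part of the setup and was cited in Proposition 3.9, and the fact that $F_{T^i}^{S^i}$ is genuinely an element of $V^{S^i}(\lambda^i)$, which is immediate from Theorem 2.2. The proof is therefore short and follows exactly the same Schur-lemma pattern used in the proof of Proposition 3.9(ii), the only new input being the nonvanishing hypothesis which supplies the strict inequality needed to exclude the zero map.
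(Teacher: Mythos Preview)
Your proof is correct and, if anything, cleaner than the paper's. Both arguments hinge on the same key fact: every ${\bf D}\in\Dc_{Y_i}$ commutes with the $\Sc_{n_i}$-action (\cite[Corollary 3.5]{DOS}), so ${\bf D}$ restricted to $V^{S^i}(\lambda^i)$ is a $\bC[\Sc_{n_i}]$-homomorphism. You then invoke Schur's lemma on the simple module $V^{S^i}(\lambda^i)$ to conclude that the kernel is either everything or zero, and the nonvanishing hypothesis picks the latter. The paper instead appeals to Proposition 3.12, arguing that the direct sum $M_{T^i}=\bigoplus_{S^i}\Oc_{Y_i}F_{T^i}^{S^i}$ forces the $F_{T^i}^{S^i}$ to be linearly independent over $\Dc_{Y_i}$, and hence that their images $\{{\bf D}(F_{T^i}^{S^i})\}$ remain linearly independent over $\bC$; only then does it use the commutation to identify the image as a $\bC[\Sc_{n_i}]$-module. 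Your route is more self-contained (it does not need the $\Oc_{Y_i}$-freeness statement of Proposition 3.12) and is exactly the Schur-lemma pattern already used in Proposition 3.9(ii); the paper's route has the minor advantage of exhibiting an explicit basis of the image.
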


\begin{proof}
Let  $\lambda^i \vdash n_i, {\bf D} \in \Dc_{Y_i}$ such that ${\bf D}(F_{T^i}^{S^i}) \neq 0$ for $S^i,T^i \in ST(\lambda^i)$ and set $W^{S^i}_{\bf D}({\lambda^i}):= {\bf D} (V^{S^i}({\lambda^i}))$ the image of the  module $V^{S^i}({\lambda^i})$  under the map ${\bf D}$. Since the $\bC$-vector space $V^{S^i}({\lambda^i})$ is  equipped with a basis $\C^{S^i}(\lambda^i) = \{ F_{T^i}^{S^i}; T^i\in ST (\lambda^i) \}$,  $W^{S^i}_{\bf D}({\lambda^i})$ is the vector space  spanned by the set $\{ {\bf D}(F_{T^i}^{S^i});\  T^i\in ST(\lambda^i) \}$. The elements of $\{ F_{T^i}^{S^i} ; T^i\in ST(\lambda^i) \}$ are linearly independent over $\Dc_{Y_i}$, otherwise the direct sums in  Proposition 3.12 cannot hold. It follows that the elements  in $\{ {\bf D}(F_{T^i}^{S^i});\ T^i \in ST(\lambda^i) \}$ are linear independent over $\bC$.  Hence $\{ {\bf D}(F_{T^i}^{S^i}) ;\  T^i \in ST(\lambda^i) \}$ is a basis of $W^{S^i}_{\bf D}({\lambda^i})$ over $\bC$. Since ${\bf D}$ commute with  elements of $\bC[\Sc_{n_i}],\ W^{S^i}_{\bf D}({\lambda^i})$ is an $\bC[\Sc_{n_i}]$-module  isomorphic to $V^{S^i}({\lambda^i}).$
\end{proof}

\begin{theorem}
Let  $\lambda \in \Pc_{r,n}$ be an $r$-diagram of size $n$, $T \in NST(\lambda)$  and ${\bf D} \in \DD$ such that ${\bf D}(F_{T}^{S}) \neq 0$ for $S \in ST(\lambda)$. Then the image of the $\bC[\Sc_{n_1} \times \cdots \times \Sc_{n_r}]$-module $V^{S}({\lambda})$  by ${\bf D}$ is a $\bC[\Sc_{n_1} \times \cdots \times \Sc_{n_r}]$-module  isomorphic to $V^{S}({\lambda})$. In others words, the action of the differential operators of $\DD$ on the higher Specht polynomials generate  isomorphic copies of the corresponding  module.
\end{theorem}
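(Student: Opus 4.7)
The plan is to mirror the approach of Proposition~3.17, reducing the product case to the single symmetric group case via the tensor product structure established in Lemma~3.4 and the notation preceding it. First I would decompose $\lambda=(\lambda^1,\ldots,\lambda^r)$, $T=(T^1,\ldots,T^r)$, $S=(S^1,\ldots,S^r)$ with $\lambda^i\vdash n_i$ and $T^i,S^i\in ST(\lambda^i)$, and recall that under the identification of Lemma~3.4 we have
$$V^S(\lambda)\cong V^{S^1}(\lambda^1)\otimes\cdots\otimes V^{S^r}(\lambda^r),\qquad F_T^S=F_{T^1}^{S^1}\otimes\cdots\otimes F_{T^r}^{S^r}.$$

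The cleanest route is then Schur's lemma. The action of $\DD=\Dc_{Y_1}\otimes\cdots\otimes\Dc_{Y_r}$ on $\OO$ commutes with the action of $\Sc_{n_1}\times\cdots\times\Sc_{n_r}$: factor-by-factor this is \cite[Corollary~3.5]{DOS}, and tensor products of commuting actions still commute. Consequently the restriction
$${\bf D}\big|_{V^S(\lambda)}\colon V^S(\lambda)\longrightarrow \OO$$
is a morphism of $\bC[\Sc_{n_1}\times\cdots\times\Sc_{n_r}]$-modules. Since $F_T^S\in V^S(\lambda)$ and ${\bf D}(F_T^S)\neq 0$ by hypothesis, this morphism is nonzero; because $V^S(\lambda)$ is irreducible (Theorem~2.2), Schur's lemma forces it to be injective. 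Hence ${\bf D}(V^S(\lambda))$ is isomorphic to $V^S(\lambda)$ as a $\bC[\Sc_{n_1}\times\cdots\times\Sc_{n_r}]$-module, which is the conclusion.

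A complementary, more concrete route would be to write ${\bf D}=\sum_k D_1^{(k)}\otimes\cdots\otimes D_r^{(k)}$ as a finite sum of elementary tensors, observe that
$${\bf D}(F_T^S)=\sum_k D_1^{(k)}(F_{T^1}^{S^1})\otimes\cdots\otimes D_r^{(k)}(F_{T^r}^{S^r})\neq 0$$
forces at least one summand to be nonzero, then apply Proposition~3.17 to each factor $D_i^{(k)}$ and take tensor products. The main obstacle of this second route is exactly why I prefer the first: a general sum of elementary tensors could, a priori, produce cancellations so that the image is not directly presented as a tensor product of factorwise images. The Schur-lemma argument sidesteps this entirely, because commutativity of the $\DD$-action with the group action turns ``nonzero image'' automatically into ``isomorphic image,'' no matter how ${\bf D}$ decomposes.
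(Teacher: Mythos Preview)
Your Schur-lemma argument is correct and in fact cleaner than the paper's own proof. The paper takes precisely the ``second route'' you sketch---and in its most naive form: it simply writes ${\bf D}=D_1\otimes\cdots\otimes D_r$ as a \emph{single} elementary tensor, deduces $D_i(F_{T^i}^{S^i})\neq 0$ for each $i$, applies Proposition~3.17 factor by factor, and then tensors the resulting isomorphic copies together via Lemma~3.4. This is exactly the step you flagged as problematic: a general element of $\DD=\Dc_{Y_1}\otimes\cdots\otimes\Dc_{Y_r}$ is only a \emph{sum} of elementary tensors, so the paper's reduction tacitly assumes something not true of arbitrary ${\bf D}$. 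Your first route avoids this entirely: once one knows that $\DD$ commutes with the $\Sc_{n_1}\times\cdots\times\Sc_{n_r}$-action (which does follow from the factorwise statement in \cite[Corollary~3.5]{DOS}, since a sum of intertwiners is still an intertwiner), irreducibility of $V^S(\lambda)$ and the nonvanishing hypothesis give the conclusion immediately. So your approach is both different and strictly more robust; the paper's approach has the virtue of making the tensor structure of the image explicit when ${\bf D}$ happens to be decomposable, but at the cost of not covering the general case as stated.
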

\begin{proof}
Let  $\lambda$ be an $r$-diagram of size $n$, $T \in NST(\lambda)$  and ${\bf D} \in \DD$ such that ${\bf D}(F_{T}^{S}) \neq 0$ for $S \in ST(\lambda)$. Then ${\bf D}$ may be written as ${\bf D}= { D_1}\otimes \cdots \otimes  { D_r}$ where ${ D_i} \in \Dc_{Y_i},
\ i=1,\ldots, r $.
 $$
  {\bf D} (F_{T}^{S}) = ({ D_1}\otimes \cdots \otimes  { D_r}) (F_{T^1}^{S^1} \otimes \cdots \otimes  F_{T^r}^{S^r}) 
   =  D_1(F_{T^1}^{S^1}) \otimes \cdots \otimes  D_r(F_{T^r}^{S^r})  \neq 0,$$
 so that  ${ D_i} F_{T^i}^{S^i} \neq 0,\ i=1\ldots, r.$ Then by Proposition 3.17, ${ D_i} F_{T^i}^{S^i}$  generate a $\bC[\Sc_{n_i}]$-module isomorphic to $V^{S^i} (\lambda^i),\ i=1,\ldots, r$. Hence $D_1(F_{T^1}^{S^1}) \otimes \cdots \otimes  D_r(F_{T^r}^{S^r}) $ generate a $\bC[\Sc_{n_1} \times \cdots \times \Sc_{n_r}]$-module isomorphic to $V^{S^1} (\lambda^1) \otimes \cdots \otimes  V^{S^r} (\lambda^r) \cong  V^{S}({\lambda})$ by LEmma 3.4.

\end{proof}

%


\begin{thebibliography}{99}




\bibitem{Ariki}
{Ariki, S.  Terasoma, T. and Yamada, H.},
      {\em Higher Specht polynomials},
        {Hiroshima Math}, 27  {(1997)},
{no. 1, 177-188}.



        
        

 \bibitem{Boerner}
  { Boerner,H.}
{\em Representations of Groups with special Consideration for the Needs of Modern Physics}, {North-Holland, New York, NY}, {(1970)}. 
        
   \bibitem{Cataldo}
{de Cataldo, M. A.A and  Migliorini. L.},
      {\em The decomposition theorem, perverse sheaves and the topology 
of algebraic maps},
   { Bull. Amer. Math. Soc. (N.S.)} { 46} {(2009)}, {no. 4, 535-633}.
   

  \bibitem{CSC}
{Coutinho, S. C.},
      {\em A primer of algebraic {$D$}-modules},
         {33},
   {Cambridge University Press},
   {Cambridge},
       {1995}.

   \bibitem{Curtis Reiner}
{Curtis, W,C  and Reiner, I}.,
      {\em Representation Theory of Finite groups and Associative Algebras },
   {Interscience},
   {New York},
       {(1962)}. 

\bibitem{End-Muk}
{Endelman, R. and Mukherjee, M.},
{\em Primitive central idempotents of the group algebra}.,
{arXiv:0803.1336v2 [math.RT]}
{(2008)}

%




%




\bibitem{MS}
{Milies, P.C. and Sehgal, K.S},
{\em An introduction to group rings},
{volume 1 of Algebras and Applications}.
{Kluwer Academic Publishers},
{Dorchet},
{(2002)}.


  \bibitem{DOS}
{Nonkan\'e, I}.,
 {\em Differential operators and the symmetric groups},
  {Journal of Physics: Conference Series 1730(1):012129},
  { (2021)}. 


\bibitem{IBK}
{Nonkan\'e, I}.,
      {\em Specht polynomials and modules over the Weyl algebra},
   {Afr. Mat},
   {30}, Issue 11 pp 279-290
       {(2019)}.
           
 \bibitem{RTG} 
{Nonkan\'e, I}., 
{\em Representation Theory of Groups and $\mathcal{D}$-Modules}, 
{International Journal of Mathematics and Mathematical Sciences}, {vol. 2021, Article ID 6613869},
{7 pages}, 
{(2021)}.

\bibitem{IB}
{Nonkan\'e, I. and Todjihounde, L}.,
{\em Differential operators and higher Specht polynomials}.,
{ Proceedings of 10th International Conference on Mathematical Modeling in Physical Science (2021)},
{accepted to  appear in J of phys.: confer.ser}.




  \bibitem{Peel}
{Peel, M.H},
      {\em Specht modules and symmetric groups },
   {J. Algebra},
   {36},
       {1975}.
{no. 1, 88-97}


  \bibitem{Sagan}
{Sagan, Bruce. E.},
      {\em The symmetric group. Representations, combinatorial 
algorithms, and symmetric functions},
{Second edition. Graduate Texts in Mathematics},
  {203}.
{ Springer-Verlag},
  {New York},
{(2001)}.


       
       
\bibitem{Tera-Yama}
{Terasoma, T. and Yamada, H}.,
      {\em Higher Specht polynomials for the symmetric group},
   {Proc Japan Acad},
   {69},
      {(1993)}.

 \bibitem{Weyl}
{Weyl, Hermann},
      {\em The theory of groups  and quantum mechanics},
  {Dover}, 
{New York}
{(!950)}.





       

       
\end{thebibliography}
\end{document}